\newtheorem{example}{Example}
\newtheorem{remark}{Remark}
\newcommand{\C}{\mathbb{C}}
\newcommand{\Z}{\mathbb{Z}}
\newcommand{\PP}{\mathbb{P}}
\newcommand{\R}{\mathbb{R}}
\newcommand{\Q}{\mathbb{Q}}
\newcommand{\sO}{\mathcal{O}}
\title{\bf Exact Solutions in\\
Structured Low-Rank Approximation}
\author{Giorgio Ottaviani\footnotemark[2]\ 
\and Pierre-Jean Spaenlehauer\footnotemark[3]\ 
\and Bernd Sturmfels\footnotemark[4]}
\begin{document}
\maketitle
\renewcommand{\thefootnote}{\fnsymbol{footnote}}
\footnotetext[2]{Universit\`a di Firenze, viale Morgagni 67A, 
50134 Firenze, Italy, {\tt ottavian@math.unifi.it}}
\footnotetext[3]{CARAMEL project, Inria Nancy Grand-Est; Universit\'e de Lorraine; CNRS. LORIA, Nancy, France, {\tt pierre-jean.spaenlehauer@inria.fr}}
\footnotetext[4]{University of California, Berkeley, CA 94720-3840, 
USA, {\tt bernd@berkeley.edu}}

\begin{abstract} \noindent
Structured low-rank approximation is the problem of
minimizing a weighted Frobenius distance to a given  matrix
among all matrices of fixed rank 
in a  linear space of matrices.
We study the critical points
of this optimization problem
using algebraic geometry. 
A particular focus lies on
Hankel matrices, Sylvester matrices and
generic linear spaces.

\end{abstract}

\section{Introduction}
{\em Low-rank approximation} in linear algebra refers to the following optimization problem:
\begin{equation}
\label{eq:frobnorm1}
{\rm minimize} \,\,\,\,
 ||\, X-U\,||_\Lambda^2 \,\,\,= \,\,\,
 \sum_{i=1}^m \sum_{j=1}^n  \lambda_{ij} (x_{ij}  - u_{ij})^2
\,\,\quad  \hbox{subject to} \quad {\rm rank}(X) \leq r .
\end{equation}
Here, we are given a real {\em data matrix} $U = (u_{ij})$ of format $m \times n$, 
and we wish to find a matrix $X = (x_{ij})$ of rank at most $r$ that is closest to $U$ in 
a weighted Frobenius norm. The entries of the {\em weight matrix}
$\Lambda = (\lambda_{ij})$ are positive reals.
If $m \leq n$ and the weight matrix  $\Lambda$ is the all-one matrix ${\bf 1}$ then
the solution to (\ref{eq:frobnorm1}) is given by the singular value decomposition
$$
 U \,\,=\,\,\, T_1 \cdot {\rm diag}(\sigma_1, \sigma_2, \ldots, \sigma_m) \cdot T_2.
$$
Here $T_1,T_2$ are orthogonal matrices, and
 $ \sigma_1 \geq \sigma_2 \geq \cdots \geq \sigma_m $ are the singular values of $U$.
By the  Eckart-Young Theorem, the matrix of rank $\leq r$  closest to
 $U$ equals
\begin{equation}
\label{eq:EY}
 U^* \,\, = \,\,\,                                                                           
T_1 \cdot  {\rm diag}(\sigma_1, \ldots, \sigma_r,0, \ldots, 0) \cdot T_2. 
\end{equation}
For  weights $\Lambda$, the situation is more complicated,
as seen in the studies \cite{MMH, Rey, SJ}. In particular,
there can be many local minima.
We discuss a small instance in  Example~\ref{ex:rey}.

In {\em structured low-rank approximation} \cite{CFP, Mar},
we are also given a  linear subspace $\mathcal{L} \subset 
\R^{m \times n}$, typically containing the matrix $U$.
We consider the restricted problem:
\begin{equation}
\label{eq:frobnorm2}
\! {\rm minimize} \,\, ||\, X-U\,||_\Lambda^2 \,= 
\sum_{i=1}^m \sum_{j=1}^n \lambda_{ij} (x_{ij}  - u_{ij})^2
\,\,\,  \hbox{subj.~to} \,\, X \in \mathcal{L} \,\hbox{and} \, {\rm rank}(X) \leq r .
\end{equation}
A best-case scenario for $\Lambda = {\bf 1}$ is this: if $U $ lies in $\mathcal{L}$ 
then so does  $U^*$.
This happens for some subspaces $\mathcal{L}$, including 
symmetric and circulant matrices, but
most  subspaces $\mathcal{L}$ 
do not enjoy this property (cf.~\cite{CFP}). 
Our problem is difficult  even for $\Lambda = {\bf 1}$. 

Most practitioners use {\em local methods} to solve  (\ref{eq:frobnorm2}).
   These methods return a local minimum. There are many heuristics
for ensuring that a local minimum is in fact
a global minimum, but there is never a guarantee
that this has been accomplished. Another
approach is to set up {\em sum  of squares} relaxations,
which are then solved with semidefinite programming (cf.~\cite{BPT}).
 These SOS methods furnish certificates of global optimality
whenever the relaxation is exact. While this does happen
in many instances, there is no a-priori guarantee either.

How then can one reliably find all global optima to
a polynomial optimization problem such as (\ref{eq:frobnorm2})?
Aside from interval arithmetic and domain decomposition techniques,
the only sure method we are aware of is to 
list and examine all the critical points. Algorithms
that identify the critical points, notably 
{\em Gr\"obner bases} \cite{Fau02}
 and {\em numerical algebraic geometry} \cite{BHSW}, 
 find all solutions over the complex numbers
     and sort out the 
real solutions after the fact.
The number of complex critical points is an intrinsic
invariant of an optimization problem,
and it is a good indicator of the running
time needed to solve that problem exactly.
The study of such algebraic degrees is an active
area of research, and well-developed results are now
available for semidefinite programming \cite{Ran} and
maximum likelihood estimation \cite{CHKS}.

The present paper applies this philosophy
to structured low-rank approximation.
A  general degree theory for  closest points on algebraic varieties
was introduced by Draisma {\it et al.}~in \cite{DHOST}.
Following their approach, our primary task is to compute the
number of complex critical points of (\ref{eq:frobnorm2}).
Thus, we seek to find the
{\em Euclidean distance degree} (ED degree)~of
$$ \mathcal{L}_{\leq r} \quad := \quad
\bigl\{\, X \in \mathcal{L} \,: \, {\rm rank}(X) \leq r \,\bigr\}.$$
This {\em determinantal variety} is always regarded as a subvariety
of the matrix space $\R^{m \times n}$, and we  use the
$\Lambda$-weighted Euclidean distance coming from $\R^{m \times n}$.
We write ${\rm EDdegree}_\Lambda( \mathcal{L}_{\leq r} )$
for the $\Lambda$-weighted Euclidean distance degree
of the variety $\mathcal{L}_{\leq r}$. Thus
${\rm EDdegree}_\Lambda( \mathcal{L}_{\leq r} )$ is the number
of complex critical points of the problem (\ref{eq:frobnorm2})
for generic data matrices $U$.
The importance of keeping track of the
weights $\Lambda$ was highlighted in
\cite[Example 3.2]{DHOST}, for the
seemingly harmless situation when $\mathcal{L}$ is the
subspace of all symmetric matrices in $\R^{n \times n}$.

Our initial focus lies on the 
{\em unit ED degree}, when $\Lambda = {\bf 1}$ is the all-one matrix,
and on the {\em generic ED degree}, denoted
${\rm EDdegree}_{\rm gen}( \mathcal{L}_{\leq r} )$,
when the weight matrix $\Lambda$ is generic.
Choosing generic weights $\lambda_{ij}$ ensures that the variety
${\mathcal L}_{\leq r}$ meets the isotropic quadric
transversally, and it hence allows us to apply
formulas from intersection theory such as \cite[Theorem 7.7]{DHOST}.

This paper is organized as follows.
In Section 2 we offer a computational
study of our optimization problem (\ref{eq:frobnorm2}) 
when the subspace $\mathcal{L}$ is generic of codimension $c$.
Two cases  are to be distinguished: either
$\mathcal{L}$ is a vector space, defined by $c$
homogeneous linear equations in the matrix entries, or
$\mathcal{L}$ is an affine space, defined by $c$
inhomogeneous linear equations.
We refer to these as the  {\em linear case}
and {\em affine case} respectively.
We present Gr\"obner basis methods for computing all
complex critical points, and we report on
their performance. From the complex critical points, one
identifies all real critical points and all local minima.

In Section 3 we  derive some explicit formulas
for ${\rm EDdegree}_{\rm gen}(\mathcal{L}_{\leq r})$
when $\mathcal{L}$ is generic. We cover the four cases
that arise by pairing the affine case and the linear case
with either unit weights or generic weights.
Here we are using techniques from algebraic geometry,
including Chern classes and the analysis of singularities.
In Section 4, we shift gears and we focus on special matrices, namely
Hankel matrices and Sylvester matrices.
Those spaces $\mathcal{L}$ arise naturally 
from  symmetric tensor decompositions and
approximate {GCD} computations.
These applications require the use of
certain specific  weight matrices $\Lambda$
other than ${\bf 1}$.

We close the introduction with two examples
that illustrate the concepts above.

\begin{example} \label{eq:hankel33} \rm
Let $m=n=3$ and
$\mathcal{L} \subset \R^{3 \times 3}$ the 
$5$-dimensional
 space of Hankel matrices:
$$ X \, = \, \begin{bmatrix} x_0 & x_1 & x_2 \\  
                                               x_1 & x_2 & x_3 \\
                                               x_2 & x_3 & x_4 \end{bmatrix} , 
                                               \quad 
U \, = \, \begin{bmatrix} u_0 & u_1 & u_2 \\  
                                               u_1 & u_2 & u_3 \\
                                               u_2 & u_3 & u_4 \end{bmatrix}  \quad \hbox{and} \quad
\Lambda \, = \, \begin{bmatrix} \lambda_0 & \lambda_1 & \lambda_2 \\  
                                               \lambda_1 & \lambda_2 & \lambda_3 \\
                                               \lambda_2 & \lambda_3 & \lambda_4 \end{bmatrix} .
$$
Our goal in  (\ref{eq:frobnorm2}) 
is to solve the following constrained optimization problem
for $r = 1,2$:
$$ \begin{matrix} {\rm minimize} \,\,
\lambda_0 (x_0 - u_0)^2\!+\! 
2 \lambda_1 (x_1 - u_1)^2 \!+\! 
3 \lambda_2 (x_2 - u_2)^2 \!+\! 
2 \lambda_3 (x_3 - u_3)^2 \!+\! 
\lambda_4 (x_4 - u_4)^2  \\
\, \hbox{subject to} \,\, {\rm rank}(X) \leq r .
\end{matrix}
$$
This can stated as an unconstrained optimization problem.
For instance, 
for rank $r = 1$, we get a one-to-one parametrization  of
$\mathcal{L}_{\leq 1}$ by setting
$x_i = s t^i$, and we seek to
$$ {\rm minimize} \,\,
\lambda_0 (s- u_0)^2 + 
2 \lambda_1 (st - u_1)^2 + 
3 \lambda_2 (st^2 - u_2)^2 + 
2 \lambda_3 (st^3- u_3)^2 + 
\lambda_4 (st^4- u_4)^2  .
$$
The ED degree is the number of critical points
with $t \not= 0$.
 We consider three weights:
$$ 
{\bf 1} \, = \,
\begin{bmatrix} 1  & 1 & 1 \\
1 & 1 & 1 \\
1 & 1 & 1 
\end{bmatrix} , \qquad
\Omega \, = \,
\begin{bmatrix} 1 & 1/2 & 1/3 \\
1/2 & 1/3 & 1/2 \\
1/3  & 1/2 & 1 
\end{bmatrix} , \qquad
\Theta  \, = \,
\begin{bmatrix} 1  & 2 & 2 \\
2 & 2 & 2 \\
2 & 2 & 1 
\end{bmatrix} .
$$
Here $\Omega$  gives the usual Euclidean metric when
$\mathcal{L}$ is identified with $\R^5$, and 
$\Theta$ arises from identifying $\mathcal{L}$
with symmetric $2 \times 2 \times 2 \times 2$-tensors,
as in Section 4. We compute
$$ \begin{matrix}
{\rm EDdegree}_{\bf 1}(\mathcal{L}_{\leq 1}) = 6, & \quad &
{\rm EDdegree}_\Omega (\mathcal{L}_{\leq 1}) = 10, & \quad &
{\rm EDdegree}_\Theta(\mathcal{L}_{\leq 1}) = 4 , \\
{\rm EDdegree}_{\bf 1}(\mathcal{L}_{\leq 2}) = 9, & \quad &
{\rm EDdegree}_\Omega (\mathcal{L}_{\leq 2}) = 13, & \quad &
{\rm EDdegree}_\Theta(\mathcal{L}_{\leq 2}) = 7 . \\
\end{matrix}
$$
In both cases, $\Omega$ exhibits the generic behavior:
${\rm EDdegree}_{\rm gen}(\mathcal{L}_{\leq r}) = {\rm EDdegree}_\Omega(\mathcal{L}_{\leq r})$. 
See Sections 3 and 4 for larger Hankel matrices
and  formulas for their ED degrees.
\hfill $\diamondsuit$
\end{example}

\begin{example} \label{ex:rey} \rm
Let $m=n=3, r = 1$ but now take $\mathcal{L} = \R^{3 \times 3}$,
so this is just the weighted rank-one approximation
problem for $3 \times 3$-matrices.
We know from \cite[Example 7.10]{DHOST} that
${\rm EDdegree}_{\rm gen}(\mathcal{L}_{\leq 1}) = 39$.
We take a circulant data matrix and a circulant weight matrix:
$$ U=\begin{bmatrix}
-59& \phantom{-}11& \phantom{-} 59 \, \\
\phantom{-}11& \phantom{-}59 & -59 \, \\
\phantom{-}59&-59& \phantom{-}11 \,
\end{bmatrix}
\quad \hbox{and} \quad
 \Lambda =\begin{bmatrix}
 9 &  6 &  1 \\
  6 &  1 & 9 \\
  1 & 9 & 6 
  \end{bmatrix}.
  $$
  This instance has $39$  critical points.
  Of these,  $19$ are real, and $7$ are local minima:
  \begin{small}  $$
\begin{bmatrix}
                 \phantom{-}0.0826 \! & \!    \phantom{-}2.7921 \! & \!     -1.5452 \\
                 \phantom{-}2.7921 \! & \!   \phantom{-}94.3235 \! & \!     -52.2007 \\
                -1.5452 \! & \!  -52.2007 \! & \!   \phantom{-}28.8890 
\end{bmatrix} \! ,
\begin{bmatrix}
      -52.2007 \! & \!   \phantom{-}28.8890 \! & \!    -1.5452 \\
        \phantom{-}2.7921 \! & \!   -1.5452 \! & \!     \phantom{-}0.0826 \\
       \phantom{-}94.3235 \! & \!  -52.2007 \! & \!     \phantom{-}2.7921
\end{bmatrix} \!$$ 
$$\begin{bmatrix}
       -52.2007 \! & \!   \phantom{-}2.7921 \! & \!    \phantom{-}94.3235 \\
        \phantom{-}28.8890 \! & \!  -1.5452 \! & \!   -52.2007 \\
        -1.5452 \! & \!   \phantom{-}0.0826 \! & \!    \phantom{-}2.7921 
\end{bmatrix} \!,
\begin{bmatrix}
  -29.8794 \! & \!    \phantom{-}36.2165 \! & \!    -27.2599 \\
  -32.7508 \! & \!    \phantom{-}39.6968 \! & \!    -29.8794 \\
   \phantom{-}39.6968 \! & \!   -48.1160 \! & \!    \phantom{-}36.2165
\end{bmatrix}$$
$$\begin{bmatrix}
  -48.1160 \! & \!   \phantom{-}36.2165 \! & \!     \phantom{-}39.6968 \\
   \phantom{-}36.2165 \! & \!   -27.2599 \! & \!    -29.8794 \\
   \phantom{-}39.6968 \! & \!   -29.8794 \! & \!    -32.7508
\end{bmatrix} \!\! , \!
\begin{bmatrix}
  -29.8794 \! & \!   -32.7508 \! & \!    \phantom{-}39.6968 \\
   \phantom{-}36.2165 \! & \!    \phantom{-}39.6968 \! & \!    -48.1160 \\
  -27.2599 \! & \!   -29.8794 \! & \!     \phantom{-}36.2165 
\end{bmatrix}
$$
$$
\begin{bmatrix}
 -25.375 &    -25.375 &   -25.375 \\
 -25.375 &    -25.375 &   -25.375 \\
 -25.375 &    -25.375 &   -25.375
\end{bmatrix}.
$$
\end{small}
The first three are the global minima.
The last matrix is the local minimum where the objective function has the largest value:
note that each entry  equals $-203/8$.
The entries of the first six matrices are algebraic numbers of
degree $10$ over $\mathbb{Q}$. For instance,
the two upper left entries  $0.0826$ and   $-48.1160$
are among the four real roots of the irreducible polynomial
$$
\begin{small} 
\begin{matrix}
164466028468224 x^{10}+27858648335954688 x^9+1602205386689376672 x^8\\+
7285836260028875412 x^7
-2198728936046680414272 x^6\\-14854532690380098143152 x^5
+
2688673091228371095762316 x^4\\+44612094455115888622678587 x^3 
-
41350080445712457319337106 x^2
\\+27039129499043116889674775 x-1977632463563766878765625.
\end{matrix}
\end{small}
$$
Thus, the critical ideal in $\mathbb{Q}[x_{11}, x_{12}, \ldots, x_{33}]$ is not prime.
It is the intersection of six maximal ideals. Their
degrees over $\mathbb{Q}$ are $ 1,2,6,10,10,10$, for a total of  $39
= {\rm EDdegree}_{\rm gen}(\mathcal{L}_{\leq 1})$.
\hfill $\diamondsuit $
\end{example}

    William Rey  \cite{Rey}
reports on numerical experiments with the  optimization problem (\ref{eq:frobnorm1}),
and he asks whether the number of local minima is 
bounded above by ${\rm min}(m,n)$.
Our Example \ref{ex:rey} gives a negative answer: the number of 
local minima can exceed $\min(m, n)$.
This result highlights the value of
our exact algebraic methods for practitioners of optimization.

\section{Gr\"obner Bases}

The critical points of the low-rank approximation problem (\ref{eq:frobnorm2})
can be computed as the solution set of a system of polynomial equations.
In this section we derive these equations, and we demonstrate how
to solve a range of instances using current Gr\"obner basis techniques.
Here, our emphasis lies on the case when $\mathcal{L}$ is a generic
subspace, either linear or affine.

Starting with the linear case, let
$\{L_1,L_2,\ldots,L_s\}$ be a basis of $\mathcal{L}^\perp$, the space of linear forms on
$\R^{m \times n}$ that vanish on $\mathcal{L}$. Thus  ${\rm codim}(\mathcal{L}) = s$,
each derivative $\partial L_k / \partial x_{ij}$ is a constant, and 
$\mathcal{L} = \{X \in \R^{m \times n} : L_1(X) = \cdots = L_s(X) = 0 \}$.
The case when $\mathcal{L}$ is an affine space can be treated with the
same notation if we take
each  $L_i$ to be a linear form plus a constant.

The following implicit formulation of the critical equations is a variation on \cite[(2.1)]{DHOST}.
We begin with the case 
$m = n = r+1$. Let $D\in\Z[x_{11},\ldots,x_{nn}]$ denote the determinant of the 
$n \times n$-matrix $X = (x_{ij})$. Given
a data matrix $\,U = (u_{ij}) \in\mathcal \R^{n\times n}$, the critical points of 
 $\sum_{i=1}^n\sum_{j=1}^n \lambda_{ij} (x_{ij}-u_{ij})^2$ on
 the determinantal hypersurface
 $\mathcal{L}_{\leq n-1} = \{X \in \mathcal{L} : D(X) = 0\}$ verify the following conditions.
 The matrix on the right has $s+2$ rows and $n^2$ columns:
$$\begin{cases}D(X)=0\\
L_1(X)=0\\
\quad\quad\vdots\\
L_s(X)=0
\end{cases}
\quad\quad\quad\quad\mathsf{Rank}
\begin{bmatrix}
\partial D/ \partial  x_{11}&\cdots& \partial D/ \partial    x_{nn}\\ 
\partial L_1/\partial x_{11}& \cdots & \partial  L_1 / \partial x_{nn} \\
\vdots&\ddots&\vdots\\
\partial L_s / \partial x_{11}&\cdots& \partial  L_s / \partial x_{nn} \\
\lambda_{11} (x_{11}-u_{11}) &\cdots& \lambda_{nn}(x_{nn}-u_{nn})
\end{bmatrix}
\leq s+1.$$

Any singular point of $\mathcal L_{\leq n-1}$ also satisfies these conditions.
The rank condition on the Jacobian matrix can be modeled by
introducing Lagrange multipliers $z_0,z_1,\ldots,z_s$.
These are new variables.
We now consider the following polynomial system 
in $n^2+s+1$ variables:
\begin{equation}\label{eq:EDdeterminant}
\!\!\! \begin{cases}D(X)=0\\
L_1(X)=0\\
\quad \vdots\\
L_s(X)=0
\end{cases}
 \begin{bmatrix} z_0 \! \! & \! \!
\cdots \! \! &\!\! z_s \!\! & \! \! 1 
\end{bmatrix}
\! \cdot \!
\begin{bmatrix}
\partial D/ \partial  x_{11}&\cdots& \partial D/ \partial    x_{nn}\\ 
\partial L_1/\partial x_{11}& \cdots & \partial  L_1 / \partial x_{nn} \\
\vdots&\ddots&\vdots\\
\partial L_s / \partial x_{11}&\cdots& \partial  L_s / \partial x_{nn} \\
\lambda_{11}(x_{11}{-}u_{11}) \! &\cdots& \! \lambda_{nn}(x_{nn}{-}u_{nn})
\end{bmatrix}
 =  \begin{bmatrix}  0 \!\! &\!\! \cdots \!\! & \!\! 0 \end{bmatrix}.
\end{equation}
 Table \ref{table:EDdeterminant} shows
the number of complex solutions to these equations.
These numbers are obtained from the formulas in
Section 3. We verified them using Gr\"obner bases.

\begin{table}
\centering
\begin{tabular}{|c|c|c|c|c|}
\hline
\multicolumn{5}{|c|}{linear, $\Lambda = {\bf 1}$}\\
\hline
$\,\quad n=$ &$2$&$3$&$4$&$5$\\
\hline\hline
$\! s=0 \!$&2&3&4&5\\
$s=1$&4&15&28&45\\
$s=2$&2&31&92&205\\
$s=3$&0&39&188&605\\
$s=4$&&33&260&1221\\
$s=5$&&21&284&1805\\
$s=6$&&9&284&2125\\
$s=7$&&3&284& 2205 \\
$s=8$&&0&284&2205 \\
$s=9$&&&264&2205 \\
$s=10$&&&204&2205 \\
$s=11$&&&120&2205 \\
$s=12$&&&52& 2205 \\
$s=13$&&&16& 2205\\
$s=14$&&&4& 2205 \\
$s=15$&&&0&2205\\
\hline
\end{tabular}
\begin{tabular}{|c|c|c|c|c|}
\hline
\multicolumn{4}{|c|}{affine, $\Lambda = {\bf 1}$}\\
\hline
$2$&$3$&$4$&$5$\\
\hline\hline
2&3&4&5\\
6&15&28&45\\
4&31&92&205\\
2&39&188&605\\
&39&260&1221\\
&33&284&1805\\
&21&284&2125\\
&9&284&2205 \\
&3&284&2205 \\
&&284&2205 \\
&&264&2205 \\
&&204& 2205 \\
&&120& 2205 \\
&&52&2205 \\
&&16&2205 \\
&&4&2205\\
\hline
\end{tabular}
\begin{tabular}{|c|c|c|c|c|c|}
\hline
\multicolumn{5}{|c|}{linear, $\Lambda$ gen.} \\
\hline
$\,\quad n=$ &$2$&$3$&$4$&$5$\\
\hline\hline
$\! s=0 \!$&6&39&284 & 2205  \\
$s=1$&4&39&284 & 2205 \\
$s=2$&2&39&284 & 2205 \\
$s=3$&0&39&284 & 2205 \\
$s=4$&&33&284 & 2205 \\
$s=5$&&21&284 & 2205 \\
$s=6$&&9&284 & 2205 \\
$s=7$&&3&284 & 2205 \\
$s=8$&&0&284 & 2205 \\
$s=9$&&&264 & 2205 \\
$s=10$&&&204 & 2205 \\
$s=11$&&&120 & 2205 \\
$s=12$&&&52 & 2205 \\
$s=13$&&&16 & 2205 \\
$s=14$&&&4 & 2205 \\
$s=15$&&&0 & 2205 \\
\hline
\end{tabular}
\begin{tabular}{|c|c|c|c|c|}
\hline
\multicolumn{4}{|c|}{affine, $\Lambda$ gen.}\\
\hline
$2$&$3$&$4$&$5$\\
\hline\hline
6&39&284&2205 \\
6&39&284&2205  \\
4&39&284&2205  \\
2&39&284&2205  \\
&39&284&2205  \\
&33&284&2205  \\
&21&284&2205  \\
&9&284&2205  \\
&3&284&2205  \\
&&284&2205  \\
&&264&2205  \\
&&204&2205  \\
&&120&2205  \\
&&52&2205  \\
&&16&2205  \\
&&4&2205  \\
\hline
\end{tabular}
\caption{\label{table:EDdeterminant}
The ED degree for the determinant of an  $n \times n$-matrix
with linear or affine entries.}
\end{table}

We observe that Table~\ref{table:EDdeterminant} has the following remarkable properties:
\begin{itemize}
\item{} There is a shift  between the ED degrees of affine and linear sections
 for $s\geq (n-1)^2$. This phenomenon will be explained 
in Proposition \ref{prop:affinelinear}.
\item{} For $\Lambda$ general, the third block of columns (linear entries) is constant for $s\le n(n-2)$ and the fourth one (affine entries) is constant for $s\le n(n-2)+1$.
This is explained in Corollaries \ref{thm:sEDpolar} and \ref{cor:EDsequal2}.
\item{} The differences between the first and the third block of columns 
(both with linear entries) equal those between the second and the fourth one
(both with affine entries). This gap
is expressed (conjecturally) with formula~(\ref{discrepancy}).
\end{itemize}
We prove the correctness of the formulation  (\ref{eq:EDdeterminant})
and then  discuss our computations.

\begin{proposition} \label{prop:formulation1}
For a generic linear (or affine) space $\mathcal L$ of codimension $s$ and for a generic 
data matrix $ U = (u_{ij})$ in $\mathcal L$, the solutions
$(X,z)$ of the polynomial system (\ref{eq:EDdeterminant}) correspond to the critical points 
$X$ of the optimization problem (\ref{eq:frobnorm2})
for square matrices of corank one.
\end{proposition}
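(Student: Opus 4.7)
The plan is to read system (\ref{eq:EDdeterminant}) as the Lagrange multiplier conditions for minimizing
$F(X) = \sum_{i,j}\lambda_{ij}(x_{ij}-u_{ij})^2$ on the variety $V := \mathcal{L}\cap\{D=0\}$, and then use genericity of $\mathcal{L}$ and $U$ to match its solutions with corank-one critical points of (\ref{eq:frobnorm2}). Unfolding the matrix equation $[\,z_0,\ldots,z_s,\,1\,]\cdot M = 0$ in (\ref{eq:EDdeterminant}) gives
$$
\lambda_{ij}(x_{ij}-u_{ij}) \;=\; -\,z_0\,\frac{\partial D}{\partial x_{ij}}(X) \;-\; \sum_{k=1}^s z_k\,\frac{\partial L_k}{\partial x_{ij}} \qquad \text{for all } i,j,
$$
while the scalar equations impose $X\in V$. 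This is exactly the first-order optimality condition for $F|_V$ at a smooth point $X$ of $V$, provided the gradients $\nabla D(X),\nabla L_1,\ldots,\nabla L_s$ are linearly independent in $\R^{m\times n}$.

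I would then verify this independence at every corank-one point of $V$, using genericity of $\mathcal{L}$. For $X$ with $\mathrm{rank}(X)=n-1$, the gradient $\nabla D(X)$ is the matrix of signed $(n-1)\times(n-1)$ cofactors of $X$; since $X$ has a nonvanishing $(n-1)$-minor, $\nabla D(X)$ is nonzero. The constant vectors $\nabla L_k$ span the linear space $\mathcal{L}^\perp$ (in the affine case, the orthogonal complement of the direction of $\mathcal{L}$), and $\nabla D(X)$ does not lie in this span for generic $\mathcal{L}$, by a Bertini-type transversality argument on the smooth rank-$(n-1)$ locus of $\{D=0\}$. The Lagrange multipliers $\mu_0,\ldots,\mu_s$ at a corank-one critical point are thus uniquely determined, and the bijection between corank-one critical points of $F|_V$ and solutions $(X,z)$ of (\ref{eq:EDdeterminant}) with $\mathrm{rank}(X)=n-1$ is given by $z_i=-\mu_i$.

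The remaining, and main, obstacle is to rule out solutions of (\ref{eq:EDdeterminant}) with $\mathrm{rank}(X)\le n-2$. At any such $X$, every $(n-1)\times(n-1)$ minor of $X$ vanishes, so $\nabla D(X) = 0$; the first row of the matrix in (\ref{eq:EDdeterminant}) is zero, and the equation degenerates to $\bigl(\lambda_{ij}(x_{ij}-u_{ij})\bigr)_{i,j}\in \mathcal{L}^\perp$. Combined with $X\in\mathcal{L}$, this is the critical-point equation for $F$ restricted to $\mathcal{L}$ alone. Since $F|_{\mathcal{L}}$ is a strictly convex quadratic with unique minimum at $X = U$, we get $X = U$. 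But $U$ is generic in $\mathcal{L}$, so $\mathrm{rank}(U) = n$, contradicting $\mathrm{rank}(X)\le n-2$. The affine case is handled identically since each $\partial L_k/\partial x_{ij}$ is still constant; everything else is a direct application of the Lagrange multiplier principle.
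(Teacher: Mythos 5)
Your argument is correct, and it arrives at the same correspondence but by a somewhat different decomposition. The paper handles all singular points of $V = \mathcal{L}\cap\{D=0\}$ uniformly: from the dependency $y_0\nabla D(X)+\sum_k y_k\nabla L_k = 0$ with $y_0\neq 0$ that characterizes a singular point, it subtracts the right multiple of $(y_0,\dots,y_s,0)$ from $(z_0,\dots,z_s,1)$, killing the $\nabla D$ term and producing $\Lambda*(X-U)\in\mathcal{L}^\perp$ without ever inspecting $\mathrm{rank}(X)$ or invoking Bertini. You instead split by rank: Bertini guarantees that a generic $\mathcal{L}$ meets the smooth (rank $n-1$) stratum of $\{D=0\}$ transversally, so there are no singular rank-$(n-1)$ points of $V$ to worry about, while at rank $\le n-2$ points the identity $\nabla D(X)=0$ immediately yields $\Lambda*(X-U)\in\mathcal{L}^\perp$. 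Both proofs then finish with the same positivity fact --- you phrase it as strict convexity of $F|_{\mathcal{L}}$, the paper as the vanishing of a real inner product $\langle X-U,\Lambda*(X-U)\rangle$ --- to force $X=U$, contradicting $\mathrm{rank}(U)=n$. The paper's substitution trick is a bit leaner, since it would also dispatch rank-$(n-1)$ singular points of $V$ if they existed and hence needs no transversality input; your version is more hands-on and makes the smooth-point Lagrange correspondence explicit. One small point to record: in the affine case, $\mathcal{L}^\perp$ in these orthogonality statements must be read as the annihilator of the direction (linear) part of $\mathcal{L}$, namely $\mathrm{span}(\nabla L_1,\dots,\nabla L_s)$; your argument uses this implicitly and it is worth stating.
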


\begin{proof}
We prove this for linear spaces $\mathcal{L}$. The argument is similar when 
$\mathcal L$ is an affine space.
Any solution of the system (\ref{eq:EDdeterminant}) corresponds to a point of $\mathcal L$ where the Jacobian matrix of $(D, L_1,\ldots, L_s, \lVert X-U \rVert_\Lambda^2)$ has a rank defect. There
are two types of such points: 
the critical points of the distance function and singular points on the determinantal variety. 
Hence  it  suffices to prove that no point in the singular locus corresponds to a solution of (\ref{eq:EDdeterminant}).
The matrix $U=(u_{ij})_{1\leq i,j\leq n}$  was assumed to be generic, so it has rank~$n$ 
since $\mathcal L$ is also generic.

If $X$ is a singular point of the linear section of the 
variety defined by $D(X)=L_1(X)=\dots=L_s(X)=0$, then there exists $(y_0,y_1,\ldots,y_s) $ with $y_0 \neq 0$ such that
$$\begin{bmatrix}y_0 & y_1 & \cdots & y_s 
\end{bmatrix}\cdot
\begin{bmatrix} \partial D/  \partial  x_{11}&\cdots&     \partial D/ \partial x_{nn} \\
\partial L_1/\partial x_{11} &\dots& \partial   L_1/\partial x_{nn} \\
\vdots&\ddots&\vdots\\
\partial L_s / \partial x_{11} &\cdots&
\partial  L_s/\partial x_{nn}
\end{bmatrix}
\, = \, \begin{bmatrix}  0 \! &\! \cdots \! & \! 0 \end{bmatrix}. $$
Let us assume by contradiction that $X$ extends to a solution
$(X,z)$ of (\ref{eq:EDdeterminant}).
Then
$$
\begin{bmatrix}
0\!&
\!(z_1\!-\! \dfrac{y_1 z_0}{y_0})\!
& \!\!\! \cdots\!\!\! &
(z_s\!-\! \dfrac{y_s z_0}{y_0})\!
&\!1
\end{bmatrix}
\cdot
\begin{bmatrix}
\partial D/ \partial  x_{11}&\! \cdots \!& \partial D/ \partial    x_{nn}\\ 
\partial L_1/\partial x_{11}&\! \cdots \! & \partial  L_1 / \partial x_{nn} \\
\vdots&\! \ddots \!&\vdots\\
\partial L_s / \partial x_{11}&\! \cdots \!& \partial  L_s / \partial x_{nn} \\
\lambda_{11}(x_{11}{-}u_{11}) \! &\! \cdots\!& \! \lambda_{nn}(x_{nn}{-}u_{nn})
\end{bmatrix}
 =  \begin{bmatrix}  0 \! &\! \cdots \! & \! 0 \end{bmatrix}.
$$
This means that $X-U$ belongs to $\mathcal L$ and $\Lambda*(X-U)$ belongs to $\mathcal L^\perp$. 
Here $*$ denotes the Hadamard (coordinatewise) product of two matrices.
The scalar product of $X-U$ and $\Lambda*(X-U)$ is zero.
Since all coordinates live in $\mathbb{R}$,
these conditions imply
$\lVert X-U \rVert_\Lambda^2=0$, and hence $X=U$.  We get a contradiction since $U$ has full rank,
whereas $D(X) = 0$.
\end{proof}

The values of ${\rm EDdegree}_\Lambda(\mathcal{L}_{\leq n-1})$ in
 Table~\ref{table:EDdeterminant} can be verified
 computationally with the formulation
(\ref{eq:EDdeterminant}).
We used the implementation of Faug\`ere's Gr\"obner basis algorithm
$F_5$ \cite{Fau02} in the {\tt maple} package {\tt FGb}.
Computing Gr\"obner bases for  (\ref{eq:EDdeterminant})
was fairly easy for $n \leq 4$, but 
difficult already  for $n=5$.
 For each of the cases in Table~\ref{table:EDdeterminant},
we computed the ED degree by running {\tt FGb} over
the finite field with $65521$ elements.
However, due to substantial coefficient growth, this did not work  over the field
$\mathbb{Q}$ of rational numbers.
Hence, to actually compute all critical points over $\C$
and hence all local minima over $\R$, even for $n = 4$,
a better formulation was required.
In what follows we shall present two such improved formulations.

Duality plays a key role in the computation of the critical points of
the Euclidean distance and was investigated in \cite[\S 5]{DHOST}.  
In what follows, we  compute
the critical points of the weighted Euclidean distance of the
determinant by using this duality.
In the following statement we are using the standing
hypothesis that all $\lambda_{ij}$ are non-zero.

\begin{proposition}\label{prop:weightedDuality}
Let $U$ be a generic $m\times n$ matrix with $m\leq n$,
let $\Lambda$ be a weight matrix,
and fix an integer $r\leq \min(m,n)$. Then there is a bijection between the critical points of 
\begin{enumerate}
\item[(1)] $Q(X)=\sum_{i,j} \lambda_{ij}(x_{ij}-u_{ij})^2$ on the variety $\C^{m\times n}_{\leq m-r}$ of corank $r$ matrices $X$, and
\item[(2)] $Q_{\rm dual}(Y)=\sum_{i,j}
    (y_{ij}{-}\lambda_{ij} u_{ij})^2/\lambda_{ij}$ on the variety $\C^{m\times n}_{\leq r}$ of rank $r$ matrices~$Y$.
\end{enumerate}
For each critical point $X$ of (1), the corresponding critical point $Y$ of (2) 
equals $\,Y=\Lambda*U-\Lambda*X$, where $*$ denotes the
 Hadamard  product. In particular, if $U$ has real entries, then
the bijection interchanges the real critical points of (1) and of (2).
\end{proposition}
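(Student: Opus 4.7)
The map $\Phi : X \mapsto Y := \Lambda*U - \Lambda*X$ is an affine isomorphism of $\C^{m \times n}$ (all $\lambda_{ij}$ are nonzero), so its inverse $Y \mapsto X = U - Y/\Lambda$ is also affine and invertibility of the correspondence is automatic. The substance of the proposition is that $\Phi$ sends critical points of the rank-$(m-r)$ problem to critical points of the rank-$r$ problem, and preserves reality. My plan is to compute the gradients of $Q$ and $Q_{\rm dual}$ after the substitution, and match them using a standard description of the normal space to a determinantal variety at a smooth point.

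A direct computation gives
\[ \tfrac{1}{2}\nabla Q(X) \,=\, \Lambda*(X-U) \,=\, -Y, \qquad \tfrac{1}{2}\nabla Q_{\rm dual}(Y) \,=\, (Y-\Lambda*U)/\Lambda \,=\, -X, \]
where $*$ and $/$ are entrywise. So under $\Phi$ the weighted gradient of each objective becomes the negative of the variable of the other, which is the algebraic mechanism driving the duality.

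For the matching, I would use the following. At a smooth point $X = AB^T$ of $\C^{m\times n}_{\leq m-r}$, with $A \in \C^{m\times(m-r)}$ and $B \in \C^{n\times(m-r)}$ of full column rank, the tangent space is $\{A'B^T + AB'^T\}$, so its Frobenius-orthogonal normal space consists of matrices $N$ with $NB = 0$ and $N^T A = 0$. By a dimension count (using $m \leq n$), any such $N$ has rank at most $r$, with column space in the left kernel of $X$ and row space in the right kernel of $X$; the analogous statement holds at smooth points of $\C^{m\times n}_{\leq r}$, with $r$ and $m-r$ swapped. Combining this with the gradient computation, $X$ is critical for (1) iff $YB = 0$, $Y^T A = 0$, and $\mathrm{rank}(Y) \leq r$. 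Writing $Y = A_Y B_Y^T$, the column spaces of $A_Y$ and $B_Y$ then lie in the left and right kernels of $X$, which is exactly the condition $XB_Y = 0$, $X^T A_Y = 0$, i.e., criticality of $Y$ for (2). The reverse direction is the mirror image: the same orthogonality relations applied at a smooth point $Y$ of $\C^{m\times n}_{\leq r}$ force $X = U - Y/\Lambda$ to have rank at most $m-r$ and to satisfy the critical equations of (1).

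The step I expect to require the most care is the genericity argument that every critical point of both (1) and (2) lies in the smooth locus of its variety; without this the normal-space description above is not available. For (1) this is essentially the argument used in the proof of Proposition~\ref{prop:formulation1}: at a singular critical point, the weighted error $\Lambda*(X-U)$ would be simultaneously tangent and normal, forcing $\lVert X - U\rVert_\Lambda^2 = 0$ and hence $X = U$, which contradicts generic $U$ having rank $m > m-r$. I would reproduce the same argument for (2), using that $\Lambda*U$ is generic precisely when $U$ is (since $\lambda_{ij}\neq 0$) and that the dual weights $1/\lambda_{ij}$ are again nonzero. Once both critical sets lie in the smooth loci, $\Phi$ is the claimed bijection, and since $\Lambda$ has real entries it sends $\R^{m\times n}$ to $\R^{m\times n}$ in both directions.
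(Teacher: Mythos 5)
Your proof is correct and reaches the same conclusion by a genuinely more hands-on route. The paper's proof is a direct appeal to biduality: it observes that $X$ is critical for (1) exactly when the pair $(X,\,\Lambda*(U-X))$ lies on the conormal variety of $\C^{m\times n}_{\leq m-r}$ (described by $X^{\mathrm t}Y=0$, $XY^{\mathrm t}=0$), notes that this conormal variety is symmetric under swapping the two determinantal varieties, and concludes by solving the linear relation $Y=\Lambda*(U-X)$ for $X$. You instead rederive this symmetry from scratch: you compute gradients to reduce criticality to the statement that $Y$ lies in the Frobenius-normal space to $\C^{m\times n}_{\leq m-r}$ at $X$, parametrize the smooth point $X=AB^{\mathrm T}$ to describe that normal space as $\{N: NB=0,\;N^{\mathrm T}A=0\}$, observe that these matrices automatically have rank $\leq r$ with column/row spaces in the left/right kernels of $X$, and then check that the same relations read as ``$X$ is normal to $\C^{m\times n}_{\leq r}$ at $Y$.'' This is exactly the content of the conormal biduality $X^{\mathrm t}Y=0$, $XY^{\mathrm t}=0$ the paper invokes, but you obtain it without citing it. Your approach is longer but self-contained; it also makes explicit a point that the paper leaves implicit, namely that for generic $U$ every critical point of (1) and (2) lies in the smooth locus of its respective variety (you correctly note this is the same genericity argument as in Proposition~\ref{prop:formulation1}). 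The reality statement via the real affine map $\Phi$ and its inverse is handled the same way in both.
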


\begin{proof}
The critical points of (1) correspond to matrices $X$ such that
the Hadamard product $\Lambda*(U-X)$ is perpendicular to the tangent space
at $X$ of the variety $\C^{m\times n}_{\leq m-r}$ of corank $r$ matrices.
Recall, e.g.~from  \cite[\S 5]{DHOST},
 that the dual variety to $\C^{m\times n}_{\leq m-r}$ is the variety $\C^{m\times n}_{\leq r}$ of rank $r$ matrices.
Hence, the critical points in (1) can be found by solving
 the linear equation $Y=\Lambda*(U-X)$ 
on the conormal variety.
That  {\em conormal variety} is the set of all 
 pairs $(X,Y)$ such that 
$X\in\C^{m\times n}_{\leq m-r}$, $Y\in\C^{m\times n}_{\leq r}$,
$X^{\rm t} \cdot Y = 0$, and 
$X \cdot Y^{\rm t} = 0$.
We can  now express $X$ in terms of $Y$ and the parameters
by writing $X=\Lambda^{*-1}*(\Lambda*U-Y)$,
where $\Lambda^{*-1}$ denotes the Hadamard (coordinatewise)
inverse of the weight matrix $\Lambda$.
 Using biduality, this means that  
 $X=\Lambda^{*-1}*(\Lambda*U-Y)$ is
 perpendicular to the tangent space at $Y$
of the variety $\C^{m\times n}_{\leq r}$. 
This is equivalent to the statement that $Y$ is a critical point of (2) on $\C^{m\times n}_{\leq r}$.
\end{proof}

In both Propositions \ref{prop:formulation1} and \ref{prop:weightedDuality},
it is assumed that the given matrix $U$ is generic. Here the term {\em generic}
is meant in the usual sense of algebraic geometry: $U$ lies in the complement
of an algebraic hypersurface.
In particular, that complement is dense 
  in $\mathbb{R}^{m \times n}$, so $U$ will be generic
  with probability one when drawn from a probability measure 
supported on  $\mathbb{R}^{m \times n}$.
However, an exact characterization of genericity is difficult.
The polynomial that defines the aforementioned hypersurface
is the {\em ED discriminant}. As can be seen in \cite[\S 7]{DHOST},
this is a very large polynomial of high degree, and we will rarely
be able to identify it in an explicit way.
  
\smallskip

Proposition \ref{prop:weightedDuality} shows that weighted low-rank approximation can be solved by the dual problem. We focus now on the corank 1 case (whose dual problem is rank 1 approximation).
For this, we use the parametrization of $n{\times} n$ matrices of rank $1$ by
\begin{equation}
\label{eq:pararank1}
(t_1,\ldots,t_n, z_1,\ldots, z_{n-1})
\,\,\mapsto \,\, \begin{bmatrix} t_1 & t_1 z_1 &\dots&t_1 z_{n-1}\\
\vdots&\vdots&\vdots&\vdots\\
t_n & t_n z_1 &\dots&t_n z_{n-1}
\end{bmatrix}.
\end{equation}

\begin{remark}\label{differentpara:rmk}
This parametrization is not surjective: the rank $1$ matrices
  whose first column is zero are missing. This is not an issue 
  when $U$ and $\mathcal L$ are generic, since in that case all critical points
  are in the image of the parametrization. However, for specific $U$
  or $\mathcal L$, if some of the critical points are missing, they
  can be computed by choosing $n$ such parametrizations whose ranges
  cover all rank $1$ matrices. This multiplies the computation time   by $n$.
Our {\it a priori} computation of the ED degree is useful also to overcome these difficulties.
Suppose the expected number of critical points
is known. Then, after some parametrizations
have been tried for the given data $(U,\mathcal{L})$,
the user is guaranteed that all critical points have been found.
\end{remark}

The parametrization (\ref{eq:pararank1})
 expresses the dual problem (for corank one)
 as an unconstrained optimization problem in $2n-1$ variables: 
\begin{equation}\label{eq:unconstrOpt}
{\rm Maximize} \,\,
Q_{\rm dual}\,=\! \sum_{1\leq i,j\leq n} \dfrac 1{\lambda_{ij}}(y_{ij}-\lambda_{ij}
  u_{ij})^2, \, \text{ where }y_{i1}=t_i \text{ and } y_{ij}=t_i z_{j-1}.
\end{equation}
Here,  ``maximize''  is used in an unconventional way: 
what we seek is the critical point furthest to $U$. 
 That critical point need not be a local maximum; see e.g.~\cite[Figure 4]{DHOST}.
 We compute the critical points for (\ref{eq:unconstrOpt}) by
applying Gr\"obner bases to the  equations
$$\partial Q_{\rm dual}/\partial t_i\,=\,\partial Q_{\rm dual}/\partial z_j\,=\,0 
\qquad \hbox{ for  } i\in\{1,\ldots,n\} \,\,\hbox{and} \, \, j\in\{1,\ldots, n-1\}.
$$
The critical points of the primal problem are  found by the formula
 $Y=\Lambda*(U-X)$.
 
 \smallskip
 
This concludes our discussion of square matrices of rank $1$ or corank $1$.
We next consider the general case of rectangular matrices
of format $m \times n$ with general linear or affine entries. 
We assume $r \leq m \leq n$ and $s \leq mn$.
Let $M$ be a complex $m \times n$-matrix of rank $r$.
Then $M$ is a smooth point in the variety
 $\C^{m\times n}_{\leq r}$ of matrices of rank $\leq r$.
Let   $\mathsf{Ker}_L(M)$ and $\mathsf{Ker}_R(M)$ denote the left and right kernels of $M$
respectively. The 
 normal space of $\C^{m\times n}_{\leq r}$ at $M$ 
 has dimension  $(m-r)(n-r)$, and it equals
$\mathsf{Ker}_L(M)\otimes \mathsf{Ker}_R(M)
\,\subset \, \C^{m \times n}$ \cite[Chapter 6]{GG}.
Its orthogonal complement  is the  tangent space at $M$,
which has dimension $rm+rn-r^2$.

In order to construct a polynomial system whose solutions are the critical points of
$X \mapsto || X-U||_\Lambda^2$
  on the smooth locus of $\mathcal L_{\leq r}$, we introduce two matrices of unknowns:
  $$Y=\begin{bmatrix}
1&\dots&0\\
\vdots&\ddots&\vdots\\
0&\dots&1\\
y_{1,1}&\dots&y_{1,m-r}\\
\vdots&\ddots&\vdots\\
y_{r,1}&\dots&y_{r,m-r}
\end{bmatrix} \quad\quad \hbox{and} \quad\quad Z=\begin{bmatrix}
1&\dots&0\\
\vdots&\ddots&\vdots\\
0&\dots&1\\
z_{1,1}&\dots&z_{1,n-r}\\
\vdots&\ddots&\vdots\\
z_{r,1}&\dots&z_{r,n-r}
\end{bmatrix}.$$
For $i\in\{1,\ldots, m-r\}$, $j\in\{1,\ldots, n-r\}$, let $N^{((m-r)(j-1)+i)}$ be the rank $1$ 
matrix which is the product of the $i$th column of $Y$ and of the $j$th row of $Z^\intercal$.
We consider 
\begin{equation}\label{eq:EDdetgeneral}
\!\!\! \begin{cases}
Y^\intercal \cdot X = 0\\
X\cdot Z = 0\\
L_1(X) = 0\\
\quad\quad\vdots\\
L_s(X)=0
\end{cases} \begin{bmatrix}w_1 \!\! & \!\!  \cdots \!\! & \!\! w_{(m-r)(n-r)+s} \!\! &\! 1
\end{bmatrix}
\begin{small} \!\!
\begin{bmatrix}
  N^{(1)}_{11}&\!\! \dots \!\!& N^{(1)}_{mn}\\
\vdots&\!\! \ddots \!\!&\vdots\\
  N^{((m-r)(n-r))}_{11}\! &\!\!\dots\!\!& N^{((m-r)(n-r))}_{mn}\\
\partial L_1 / \partial x_{11}& \!\! \cdots \!\! &
\partial  L_1 / \partial x_{mn }\\
\vdots&\!\! \ddots \!\! &\vdots\\
\partial L_s / \partial x_{11 }&\!\! \cdots \!\! &
\partial L_s / \partial x_{mn }\\
\lambda_{11}(x_{11}{-}u_{11})&\!\! \dots \!\! &\lambda_{mn}(x_{mn}{-}u_{mn})
\end{bmatrix}
\end{small}
=0.
\end{equation}

The rank condition on the matrix in (\ref{eq:EDdetgeneral}) comes from the fact that $M\in \mathcal L_{\leq r}$ is a critical point if the gradient of the distance function at $M$ belongs to the normal space of $\mathcal L_{\leq r}$ at $M$. The first $(m-r)(n-r)+s$ rows of the matrix 
span the normal space of $\mathcal L_{\leq r}$ at a smooth point.
This formulation avoids saturating by the singular locus, which is often too costly.

\begin{proposition}\label{prop:generalformulation}
  For a generic affine space $\mathcal L$ of codimension $s$ and a generic matrix $U$ in $\mathcal L$, the polynomial system \ref{eq:EDdetgeneral} has finitely many complex solutions which correspond to the critical points of the weighted Euclidean distance function on the smooth locus of $\mathcal L_{\leq r}$.
\end{proposition}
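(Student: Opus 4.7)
The plan is to exhibit a bijection between complex solutions of (\ref{eq:EDdetgeneral}) and critical points on the smooth locus of $\mathcal{L}_{\leq r}$, and then invoke the generic finiteness of the critical locus. First I would recall the local geometry: a smooth point of $\mathcal{L}_{\leq r}$ is a matrix $X$ of rank exactly $r$ at which $\mathcal{L}$ meets the rank-$r$ stratum of $\C^{m\times n}$ transversally, and at such a point the normal space in $\C^{m\times n}$ is the direct sum $\mathcal{L}^\perp \oplus \bigl(\mathsf{Ker}_L(X)\otimes\mathsf{Ker}_R(X)\bigr)$, as recalled from \cite[Chapter 6]{GG} just before the statement of the proposition. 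Criticality of the weighted distance function at $X$ is then the vector equation
\[
\Lambda * (X-U) \;\in\; \mathcal{L}^\perp \;+\; \mathsf{Ker}_L(X)\otimes\mathsf{Ker}_R(X).
\]

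Next I would explain how the auxiliary variables parametrize this condition. The identity blocks at the top of $Y$ and $Z$ fix a normalization of kernel bases: whenever the $(m-r)\times(m-r)$ minor in the first $m-r$ rows of some basis of $\mathsf{Ker}_L(X)$ is nonzero (and similarly for the right kernel), there is a unique matrix $Y$ of the prescribed shape whose columns form a basis of $\mathsf{Ker}_L(X)$, and a unique such $Z$ for $\mathsf{Ker}_R(X)$. With this normalization the $(m-r)(n-r)$ rank-one matrices $N^{(\ell)}$ form a basis of $\mathsf{Ker}_L(X)\otimes\mathsf{Ker}_R(X)$. The block equations $Y^\intercal X = 0$ and $X Z = 0$ force the columns of $Y$ and $Z$ into the respective kernels and hence force $\mathrm{rank}(X)\leq r$; the rank-defect condition on the large matrix in (\ref{eq:EDdetgeneral}) asserts that its last row, the gradient of $\tfrac12\lVert X-U\rVert_\Lambda^2$, is a linear combination of the first $(m-r)(n-r)+s$ rows, and the coefficients $w_k$ are the corresponding multipliers. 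Reading them off recovers exactly the displayed criticality equation, so the system sets up a one-to-one correspondence between its solutions $(X,Y,Z,w)$ and pairs consisting of a smooth critical point $X$ together with its normalized kernel data.

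The final step is to verify three genericity statements for generic $(\mathcal{L},U)$: (i) every critical point of the distance function on $\mathcal{L}_{\leq r}$ has rank exactly $r$, so the system picks up no spurious rank-deficient solutions; (ii) the kernel-normalization minors are nonzero at each critical point, so the normalized $Y$ and $Z$ exist; and (iii) the number of critical points is finite. Items (ii) and (iii) are standard Zariski-open conditions: (iii) is the generic finiteness of the $\Lambda$-weighted ED locus of $\mathcal{L}_{\leq r}$, already established in the framework of \cite[\S 2 and \S 7]{DHOST}, and (ii) follows from an incidence-variety argument showing that the locus where a kernel-normalization minor vanishes at a critical point projects to a proper subvariety of the parameter space of $U$.

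The main obstacle is (i). At a matrix $X$ of rank strictly less than $r$ the columns of $Y$ and $Z$ lie in proper subspaces of $\mathsf{Ker}_L(X)$ and $\mathsf{Ker}_R(X)$, the $N^{(\ell)}$ span a proper subspace of the tensor product, and the rank-defect condition in (\ref{eq:EDdetgeneral}) could in principle be satisfied along a positive-dimensional family of spurious $(Y,Z,w)$. I would rule this out by forming the incidence variety of tuples $(X,Y,Z,w,\mathcal{L},U)$ satisfying (\ref{eq:EDdetgeneral}) with $\mathrm{rank}(X)<r$, bounding its dimension by a parameter count in the spirit of the transversality arguments of \cite[\S 7]{DHOST}, and concluding that its projection to the $(\mathcal{L},U)$-space is a proper subvariety. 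A generic choice of $(\mathcal{L},U)$ then has only rank-$r$ solutions, and via the dictionary of the second paragraph these are in bijection with the smooth critical points of the weighted distance function on $\mathcal{L}_{\leq r}$, which proves both finiteness and the claimed correspondence.
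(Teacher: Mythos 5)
Your proposal is essentially a detailed, self-contained expansion of what the paper dispatches in two sentences by citing \cite[Lemma~2.1]{DHOST} and declaring the argument ``analogous to Proposition~\ref{prop:formulation1}.'' The correspondence you set up between solutions $(X,Y,Z,w)$ and smooth critical points, via normalized kernel bases for $\mathsf{Ker}_L(X)$ and $\mathsf{Ker}_R(X)$, is the right dictionary, and your observation that the system could in principle pick up spurious solutions at matrices of rank strictly less than $r$ (where the $N^{(\ell)}$ span only a proper subspace of $\mathsf{Ker}_L(X)\otimes\mathsf{Ker}_R(X)$, and the multipliers $w$ may live in a positive-dimensional family) is precisely the issue that must be addressed. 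Where you diverge from the paper's stated route is in how you dispose of that issue: the paper's Proposition~\ref{prop:formulation1} uses a direct contradiction argument built on a linear dependence among $\nabla D$ and the $\nabla L_i$ at a singular point, leading to $\lVert X-U\rVert_\Lambda^2 = 0$ and hence $X=U$; that argument is specifically tailored to the hypersurface/corank-one situation, and it is not obvious how to transplant it verbatim to the formulation with $Y$, $Z$, and the $N^{(\ell)}$. You instead propose an incidence-variety dimension count over the parameter space of $(\mathcal L, U)$, and likewise for the vanishing of the kernel-normalization minors. That is a legitimate and arguably cleaner strategy, and it is closer in spirit to what \cite[Lemma~2.1]{DHOST} actually provides (generic radicality and finiteness of the critical ideal). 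So the proposal is correct; it fills in details the paper leaves implicit, and for the singular-locus step it substitutes a standard generic-fiber-dimension argument for the ad hoc contradiction used in the corank-one case. The only thing I would tighten is the phrasing of your item (i): it is not that every critical point of the distance function has rank exactly $r$ (critical points of rank $<r$ on the singular locus may well exist but are simply not ``on the smooth locus'' and hence not claimed by the proposition); the claim you actually need, and prove, is that no solution of the polynomial system has $\mathrm{rank}(X)<r$ for generic $(\mathcal L,U)$.
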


\begin{proof}
This  is derived  from \cite[Lemma 2.1]{DHOST}.
It is  analogous to Proposition  \ref{prop:formulation1}.
\end{proof}

As in the corank $1$ case, for special data $(U, \mathcal{L})$
some critical points may be missed because our formulation
  computes only the critical points  in a dense open subset of
  $\mathcal L_{\leq r}$.   However, the same fix as in 
  Remark \ref{differentpara:rmk} works here. We can 
  redo the computations in any of the
  $\binom{n}{r}\binom{m}{r}$ charts corresponding to the
  invertibility of pairs of square submatrices of $Y$ and $Z$.

We next discuss our computational experience with Gr\"obner bases.
In Table \ref{table:expeFormulation},
 we compare the efficiency of the different approaches on a specific problem: computing the weighted rank $3$ approximation of a $4\times 4$ matrix. The experimental setting is the following: we consider a $4\times 4$ matrix $U$ with integer entries picked uniformly at random in $\{-100,\ldots, 100\}$ and a random weight matrix $\Lambda$ with positive integer entries chosen at random in $\{1,\ldots, 20\}$. 
 By Table \ref{table:EDdeterminant}, the generic ED degree is $284$ and the 
  ED degree for $\Lambda = {\bf 1}$ is $4$. We report in Table \ref{table:expeFormulation} the timings for computing a lexicographical Gr\"obner basis with the \texttt{maple} package \texttt{FGb} \cite{Fau02}. Once a Gr\"obner basis is known, isolation techniques may be used to obtain the real roots. The \texttt{maple} package \texttt{fgbrs} provides implementations of such methods.

\begin{table}[h]
\centering
\begin{tabular}{@{}|c|@{}c@{}|@{}c@{}|@{}c@{}|@{}c@{}|@{}}
\cline{2-5}
\multicolumn{1}{c|}{}&\begin{tabular}{c}\!Determinant\!\\
primal (\ref{eq:EDdeterminant})
 \end{tabular}
&\begin{tabular}{c}\!Parametric\!\\
 dual (\ref{eq:unconstrOpt})\end{tabular}
&\begin{tabular}{c}\!Normal space\!\\primal (\ref{eq:EDdetgeneral})\end{tabular}
&\begin{tabular}{c}\!Normal space\!\\dual (\ref{eq:EDdetgeneral}) \! \end{tabular}\\
\hline
$\Lambda $ generic, $\mathsf{GF}(65521)$&5s&{\bf 1.3s}&6s&8.6s\\
$\Lambda $ generic, over $\Q$&$>1$ day&{\bf 891s}&1327s&927s\\
$\Lambda = {\bf 1}$, over $\Q$&0.3s&{\bf 0.2s}&0.4s&0.5s\\
\hline
\end{tabular}
\caption{\label{table:expeFormulation} Symbolic computation of the
weighted rank $3$ approximations of a $4\times 4$ matrix}
\end{table}

We examine three scenarios. In the first row, the computation is performed over a finite field.
This gives  information about the algebraic difficulty of the problem: there is no 
coefficient growth, and the timings 
  indicate the number of arithmetic operations in Gr\"obner bases algorithms. However, 
  finding local minima  requires computing   over $\Q$.
 In rows 2 and 3 of   Table \ref{table:expeFormulation}, we compare the
 case of generic weights with the  unweighted case
(\ref{eq:EY}) that corresponds to     the singular value decomposition ($\Lambda=\mathbf 1$).
The dual problem is easiest to solve, in particular with the 
unconstrained formulation  (\ref{eq:unconstrOpt}). Note that, for
$s \geq 1$, such an unconstrained formulation is not available, since
 $\mathcal{L}_{\leq r}$ is generally not a unirational variety.

\begin{table}[h] \centering
\begin{tabular}{|c|c|c|c|c|}
\hline
$(m,n,r)$&$s=0$&$s=1$&$s=2$&$s=3$\\
\hline\hline
$(4,4,2)$&
$\mathbf{4}$/0.42s/1.8s&
$\mathbf{54}$/1.93s/744s&
$\mathbf{230}$/52.7s/--&
$\mathbf{582}$/349.2s/--\\\hline
$(3,4,2)$&
$\mathbf{3}$/0.2s/0.3s&
$\mathbf{15}$/0.3s/7.4s&
$\mathbf{43}$/1.2s/132s&
$\mathbf{71}$/1.5s/1120s\\\hline
$(3,5,2)$&
$\mathbf{3}$/0.3s/0.5s&
$\mathbf{15}$/0.5s/16s&
$\mathbf{43}$/2.1s/400s&
$\mathbf{87}$/7.1s/6038s\\\hline
\end{tabular}

\medskip

\begin{tabular}{|c|c|c|c|}
\hline
$(m,n,r)$&$s=4$&$s=5$&$s=6$\\
\hline\hline
$(4,4,2)$&
$\mathbf{998}$/1474s/--&
$\mathbf{1250}$/2739s/--&
$\mathbf{1250}$/2961s/--\\\hline
$(3,4,2)$&
$\mathbf{83}$/2.2s/2696s&
$\mathbf{83}$/2.3s/4846s&
$\mathbf{83}$/2.1s/5764s\\\hline
$(3,5,2)$&
$\mathbf{127}$/16s/59091s&
$\mathbf{143}$/20s/160094s&
$\mathbf{143}$/20s/68164s\\\hline
\end{tabular}

\medskip

\begin{tabular}{|c|c|c|c|c|}
\hline
$(m,n,r)$&$s=7$&$s=8$&$s=9$\\
\hline\hline
$(4,4,2)$&
$\mathbf{1074}$/1816s/--&
$\mathbf{818}$/821s/--&
$\mathbf{532}$/349s/--\\\hline
$(3,4,2)$&
$\mathbf{73}$/2.2s/4570s&
$\mathbf{49}$/1.0s/1619s&
$\mathbf{22}$/0.8s/350s\\\hline
$(3,5,2)$&
$\mathbf{143}$/20s/99208s&
$\mathbf{143}$/20s/163532s&
$\mathbf{128}$/18s/263586s\\\hline
\end{tabular}

\medskip

\begin{tabular}{|c|c|c|c|}
\hline
$(m,n,r)$&$s=10$&$s=11$&$s=12$\\
\hline\hline
$(4,4,2)$&
$\mathbf{276}$/92s/--&
$\mathbf{100}$/42s/450988s&
$\mathbf{20}$/1.4s/1970s\\\hline
$(3,4,2)$&
$\mathbf{6}$/0.3s/6.4s&&\\\hline
$(3,5,2)$&
$\mathbf{88}$/13s/67460s&
$\mathbf{40}$/1.9s/4568s&
$\mathbf{10}$/0.8s/114s\\\hline
\end{tabular}

\caption{\label{table:EDsecdet}
Symbolic computations for affine sections of determinantal varieties
with $\Lambda = {\bf 1}$.}
\end{table}

In Table \ref{table:EDsecdet}, we report on some Gr\"obner basis
computations with the \texttt{maple} package \texttt{FGb} for $\Lambda
= {\bf 1}$. Here we used the formulation (\ref{eq:EDdetgeneral}).  The
ED degree, given in bold face, is followed by 
the time, measured in seconds, for computing the graded reverse lexicographic
Gr\"obner basis.  The first timing is obtained by 
performing the computation over the finite field ${\sf
    GF}(65521)$; the second one is obtained by computing over the field of rationals $\Q$. The symbol ``$-$'' means that we did not obtain the Gr\"obner basis after seven days of computation.

An important observation in Table \ref{table:EDsecdet} is the correlation
between the reported running times and the values of ${\rm EDdegree}_{\mathbf 1}$.
The former tell us how many arithmetic operations are needed to find a Gr\"obner basis.
 This suggests that  the ED degree is an accurate measure for
the complexity of solving low-rank approximation problems with symbolic algorithms,
and it serves as a key motivation for computing
ED degrees using advanced tools from algebraic geometry.
This will be carried out in the next section, both for
$\Lambda$ generic and for~$\Lambda = {\bf 1}$.
In particular, we shall arrive at theoretical explanations for the
ED degrees in Tables \ref{table:EDdeterminant}
and \ref{table:EDsecdet}.

\section{Algebraic Geometry}\label{sec:alggeo}

The study of ED degrees for
algebraic varieties was started in \cite{DHOST}.
This section builds on and further develops the geometric  theory in that paper.
We focus on the low rank approximation problem  (\ref{eq:frobnorm2}), and
 we derive general formulas for the ED degrees in
Tables \ref{table:EDdeterminant} and \ref{table:EDsecdet}.

We recall that an affine variety $X\subset\C^{N+1}$ is an {\em affine cone}
if $x\in X$ implies $tx\in X$ for every $t\in\C$. The variety of $m
\times n$-matrices of rank $\leq r$ is an affine cone.  If
$X\subset\C^{N+1}$ is an affine cone, then the corresponding
projective variety $\PP X\subset\PP^{N}$ is well defined.  The ED degree of $\PP X$ is
the ED degree of its affine cone $X$.
The following proposition explains the shift between the third and fourth 
column of Table~\ref{table:EDdeterminant}. More generally, it shows that we can restrict the analysis to linear sections, since the ED degree (for generic weights) 
in the affine case can be deduced from the linear case.

\begin{proposition}\label{prop:affinelinear}
Let $X\subset\C^{N+1}$ be an affine cone,
let ${\mathcal A}^s$ (resp. ${\mathcal L}^{s}$) be a generic affine (resp. linear) subspace 
of codimension $s \geq 1$ in $\C^{N+1}$. Then
\begin{equation}
\label{eq:confirm}
\mathrm{EDdegree}_{\rm gen}(X\cap {\mathcal A}^{s})
\,\,=\,\,
\mathrm{EDdegree}_{\rm gen}(X\cap {\mathcal L}^{s-1}).
\end{equation}
\end{proposition}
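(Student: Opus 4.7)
The plan is to reduce the statement to the base case $s=1$ and then prove that case by exploiting the $\mathbb{C}^*$-symmetry of the cone.

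\textbf{Reduction to $s=1$.} For generic $\mathcal{A}^s$ not containing the origin, its linear span $\widetilde{\mathcal{L}}$ is a generic linear subspace of codimension $s-1$, and $\mathcal{A}^s$ sits inside $\widetilde{\mathcal{L}}$ as a generic affine hyperplane. Let $Y := X \cap \widetilde{\mathcal{L}}$, still an affine cone. The generic ED degree is invariant under restricting the ambient space to any linear subspace containing the variety (the $\Lambda$-orthogonal decomposition of a generic data point $U \in \mathbb{C}^{N+1}$ shows that only its component in $\widetilde{\mathcal{L}}$ enters the critical equation, and the generic weight $\Lambda$ restricts to a generic inner product on $\widetilde{\mathcal{L}}$). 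Consequently, the equality to establish reduces to
\[
\mathrm{EDdegree}_{\rm gen}(Y \cap H) \,=\, \mathrm{EDdegree}_{\rm gen}(Y),
\]
for the affine cone $Y$ and a generic affine hyperplane $H = \{L = 1\}$.

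\textbf{Proof of the base case.} The idea is that both quantities should be identified with the same projective invariant attached to $\mathbb{P}Y$. By the definition adopted in \cite{DHOST}, $\mathrm{EDdegree}_{\rm gen}(Y) = \mathrm{EDdegree}_{\rm gen}(\mathbb{P}Y)$. For the affine section: since $Y$ is a cone, each one-dimensional orbit $\mathbb{C}^*\cdot y \subset Y$ meets the generic hyperplane $H$ in a single point $y/L(y)$, yielding a birational bijection $Y \cap H \dashrightarrow \mathbb{P}Y$. I would then set up a rational map $\Phi: E_Y \dashrightarrow E_{Y \cap H}$ between the two ED correspondences,
\[
\Phi(y, U) \,=\, \bigl(y/L(y),\,U/L(y) - \nu\, \Lambda^{*-1}*\nabla L\bigr),
\]
where $\nu$ is determined by the condition $L(U') = 1$ and the equation $N_{ty}Y = N_y Y$ is used. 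Using the $\mathbb{C}^*$-action $(y,U) \mapsto (ty, tU)$ on $E_Y$ (which is an action because $Y$ is a cone), one checks that $\Phi$ is $\mathbb{C}^*$-invariant and descends to a birational map $E_Y / \mathbb{C}^* \dashrightarrow E_{Y \cap H}$: the preimage of a generic $(y', U') \in E_{Y \cap H}$ is a single $\mathbb{C}^*$-orbit, with the parameter $\nu$ forced to equal the Lagrange multiplier associated to the constraint $L(x)=1$. Counting generic fibers via the identification $\mathbb{C}^{N+1}/\mathbb{C}^* = \mathbb{P}^N$ (of which $H$ is an affine chart) then yields the equality of the two ED degrees.

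\textbf{Main obstacle.} The subtle point is that after the quotient, the two projections to the parameter spaces—namely $E_Y/\mathbb{C}^* \to \mathbb{P}^N$ and $E_{Y \cap H} \to H \hookrightarrow \mathbb{P}^N$—do not commute with $\Phi$ on the nose, because the data adjustment $U \mapsto U/L(y) - \nu\,\Lambda^{*-1}*\nabla L$ shifts $U$ by a direction depending on the Lagrange multiplier $\nu$. The delicate part is verifying that, modulo the $\mathbb{C}^*$-action on $U$, this shift does not alter the generic degree of the projection; equivalently, that the birational equivalence $E_Y/\mathbb{C}^* \cong E_{Y \cap H}$ respects the two different degree counts. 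This is what ultimately permits the numerical identification of the two ED degrees and completes the proof.
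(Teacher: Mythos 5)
Your approach is genuinely different from the paper's: the paper passes to the projective closure $\overline{X}\subset\PP^{N+1}$, invokes \cite[Theorem 6.11]{DHOST} (affine ED degree equals projective ED degree under transversality), observes that $\overline{X}$ is the projective cone over $\PP X$, so $\overline X\cap\mathcal L^s=\PP X\cap\mathcal L^{s-1}$, and unwinds the chain via the definition of the ED degree of a projective variety. Your plan instead tries to compare ED correspondences directly using the $\C^*$-action on the cone, which would yield a more self-contained argument — but it has a genuine gap in the crucial step.

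The gap is exactly the one you flag under ``Main obstacle,'' and it is not a technicality that can be dismissed: it kills the argument as written. Your $\Phi$ sends $(y,U)$ to a critical pair $(y',U')$ whose data point $U'=U/L(y)-\nu\,\Lambda^{*-1}*\nabla L$ depends on $y$ through both the scaling factor $L(y)$ and the Lagrange multiplier $\nu$. Consequently the fiber of $E_Y/\C^*\to\PP^{N-s+1}$ over a fixed $[U]$ is \emph{not} carried into a single fiber of $E_{Y\cap H}\to\,\text{(data space)}$; the $d_1$ critical points over $[U]$ map to points of $E_{Y\cap H}$ lying over $d_1$ distinct data points $U'_1,\dots,U'_{d_1}$. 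A birational equivalence between the total spaces of two fibrations does not, by itself, equate the degrees of those fibrations; you need a commuting square over birational base maps, and no such square exists here. (Also note the dimension count: $E_Y$ and $E_{Y\cap H}$ both have the dimension of the ambient data space, so $E_Y/\C^*$ is one dimension short; your constraint $L(U')=1$ cuts the codomain down by one too, but the resulting map still fails to be a morphism of fibrations for the reason above.) What would be needed is either a degree-preserving correspondence between the two fibrations, or a specialization/limit argument showing that the shift by $\nu\,\Lambda^{*-1}*\nabla L$ does not change the intersection number. Neither is supplied. Until that is done, the base case — and hence the proposition — is not proved. The reduction-to-$s=1$ step (restriction to the linear span $\widetilde{\mathcal L}$) is correct and is implicitly also used in the paper's proof, so that part stands.
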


\begin{proof}
Let $\overline{X}\subset\PP^{N+1}$ be the projective closure of $X$.
From \cite[Theorem 6.11]{DHOST}, we have
$\mathrm{EDdegree}_{\rm gen}(X)=\mathrm{EDdegree}_{\rm gen}(\overline{X})$, since the transversality assumptions
in that result are satisfied for general weights.
From the equality $\overline{X\cap {\mathcal A}^s}=\overline{X}\cap {\mathcal L}^s$, we conclude 
$\mathrm{EDdegree}_{\rm gen}(X\cap {\mathcal A}^s)=\mathrm{EDdegree}_{\rm gen}(\overline{X}\cap {\mathcal L}^s)=
\mathrm{EDdegree}_{\rm gen}(\PP X\cap {\mathcal L}^{s-1})=\mathrm{EDdegree}_{\rm gen}(X\cap {\mathcal L}^{s-1})$. Here,  the second equality follows from
$\PP X=\overline{X}\cap {\mathcal L}^1$.
\end{proof}

Consider a projective variety $X $ embedded in $ \PP^N$ with a generic system of coordinates.
It was shown in \cite[Theorem 5.4]{DHOST} that
${\rm EDdegree}_{\rm gen}(X)$ is  the sum of the degrees of the polar classes
$\delta_i(X)$. Here, $\delta_i(X)$ denotes the degree
of the {\em polar class} of $X$ in dimension $i$, as in \cite{Holme}. Moreover, if
${\mathcal L}^s$ is a generic linear subspace of codimension $s$ in $\PP^N$ then
$\,\delta_i(X\cap {\mathcal L}^s)=\delta_{i+s}(X)\,$ by \cite[Corollary 6.4]{DHOST}.
We call {\em $s$-th sectional ED degree} of $X$ the number ${\rm EDdegree}_{\rm gen}(X\cap {\mathcal L}^s)$. 
We denote by $X^*$ the dual variety of $X$,
as in \cite[\S 5]{DHOST}, and already seen
 in the proof of Proposition \ref{prop:weightedDuality}.

\begin{corollary}\label{thm:sEDpolar}
The $s$-th {\em sectional ED degree} of $X$ is expressed
in terms of polar classes as
\begin{equation}
\label{eq:sumdelta}
 \mathrm{EDdegree}_{\rm gen}(X\cap {\mathcal L}^s)\,\,=\,\sum_{\ell \ge s}\delta_\ell (X) . 
 \end{equation}
If $\,s \leq \mathrm{codim}(X^*)-1\,$ then 
$X$ and $X \cap {\mathcal L}^s$ have the same generic ED degree.
\end{corollary}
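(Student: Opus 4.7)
My plan is to prove the two assertions separately by assembling previously cited ingredients from \cite{DHOST}.

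For the equality (\ref{eq:sumdelta}), the inputs are already on the page. Theorem 5.4 of \cite{DHOST} says $\mathrm{EDdegree}_{\rm gen}(Y)=\sum_{i\ge 0}\delta_i(Y)$ for any projective variety $Y$ in generic coordinates, and Corollary 6.4 of \cite{DHOST} says $\delta_i(X\cap \mathcal{L}^s)=\delta_{i+s}(X)$ for $0\le i\le \dim X-s$. Applying the first identity to $Y=X\cap \mathcal{L}^s$ (the genericity of $\mathcal{L}^s$ transfers to the ambient coordinates on the section), substituting the second identity, and reindexing by $\ell=i+s$ yields
\[
\mathrm{EDdegree}_{\rm gen}(X\cap \mathcal{L}^s)=\sum_{i\ge 0}\delta_i(X\cap \mathcal{L}^s)=\sum_{i\ge 0}\delta_{i+s}(X)=\sum_{\ell \ge s}\delta_\ell(X).
\]
This is purely bookkeeping once one checks that the unused classes $\delta_\ell(X\cap \mathcal{L}^s)$ with $\ell>\dim X-s$ vanish for dimensional reasons.

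For the second assertion, specializing (\ref{eq:sumdelta}) to $s=0$ gives $\mathrm{EDdegree}_{\rm gen}(X)=\sum_{\ell \ge 0}\delta_\ell(X)$. Comparing with the formula above, the two ED degrees agree precisely when $\delta_0(X)=\cdots=\delta_{s-1}(X)=0$. Since polar classes are nonnegative integers, the task reduces to the vanishing $\delta_\ell(X)=0$ for $\ell<\mathrm{codim}(X^*)-1$: under the hypothesis $s\le \mathrm{codim}(X^*)-1$, every $\ell\in\{0,\dots,s-1\}$ satisfies $\ell\le s-1 < \mathrm{codim}(X^*)-1$, so all the relevant classes drop out.

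This vanishing is the classical dual-defect statement. Using the conormal variety $\mathrm{Con}(X)\subset \PP^N\times(\PP^N)^\vee$ of dimension $N-1$, the second projection surjects onto $X^*$ with general fiber of dimension $\mathrm{def}(X)=N-1-\dim(X^*)=\mathrm{codim}(X^*)-1$; the $\ell$-th polar class is obtained by intersecting $\mathrm{Con}(X)$ with the preimage of a generic codimension-$(\ell+1)$ linear subspace of $(\PP^N)^\vee$ and pushing forward to $\PP^N$, and a straightforward dimension count shows this vanishes whenever $\ell<\mathrm{def}(X)$. I would cite Holme \cite{Holme}, whose framework underlies the polar-class conventions used here, or equivalently derive the bound in two lines from the Chern-class description of $\delta_\ell$ employed in \cite{DHOST}. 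The only real obstacle is matching the indexing conventions for polar classes between \cite{Holme} and \cite{DHOST}; once that is aligned, the whole corollary is an immediate consequence.
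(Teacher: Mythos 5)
Your proof is correct and follows essentially the same route the paper takes: combine \cite[Theorem 5.4]{DHOST} (ED degree equals the sum of polar class degrees) with \cite[Corollary 6.4]{DHOST} (shifting of polar classes under generic linear section) to get the formula, then invoke the vanishing $\delta_\ell(X)=0$ for $\ell \leq \mathrm{codim}(X^*)-2$ for the second claim. The paper states this vanishing as a known fact without proof, while you sketch the conormal-variety/dual-defect justification; this is a harmless elaboration, not a different approach.
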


\begin{proof}
This follows from results in Sections 5 and 6 in \cite{DHOST}.
In order to compute $\mathrm{ED degree\ }(X\cap {\mathcal L}^s)$ we have to sum
$\delta_i(X)$ for $i\ge s$. However, it is known that
$\delta_i(X)=0$ if $i\le \textrm{codim}(X^*)-2$.
\end{proof}

A special role in \cite{DHOST} is played by the 
{\em isotropic quadric} $ Q = V(x_0^2 + x_1^2 + \cdots + x_N^2)$ in $\PP^N$.
If $X$ is smooth and transversal to $Q$ then
\cite[Theorem 5.8]{DHOST} gives an explicit formula 
for the ED degree in terms of Chern classes of $X$ $c_i(X)$.
A thorough treatment of Chern classes can be found in \cite{Fulton};
the reader interested in the applications in this paper can be referred to
the basics provided in \cite{DHOST}.
By combining  \cite[Theorem 5.8]{DHOST}
 with Corollary \ref{thm:sEDpolar}, we obtain

\begin{theorem} \label{thm:thmA}
Let  $X \subset \PP^N$ be a smooth projective variety of dimension $M$
and assume that $X$ is transversal to the isotropic quadric $Q$.
Then the $s$-th sectional ED degree of $X$ equals
$$  \mathrm{ED degree}_{\rm gen}(X\cap {\mathcal L}^s)\,\,\, = \,\,\,
\sum_{\ell = s}^{M}\sum_{k=\ell }^{M} (-1)^{M-k} \binom{k+1}{\ell+1} \deg(c_{M-k}(X)). $$
\end{theorem}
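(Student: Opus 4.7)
The plan is to combine two results that are already on the table: Corollary \ref{thm:sEDpolar}, which converts sectional ED degrees into sums of polar classes, and \cite[Theorem 5.8]{DHOST}, which under the smoothness plus transversality-to-$Q$ hypotheses of the present theorem expresses the ED degree of a smooth projective variety as a linear combination of Chern classes. So the proof should be a straightforward concatenation of the two, followed by a check of bookkeeping.

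First, I would apply Corollary \ref{thm:sEDpolar} to write
\begin{equation*}
\mathrm{EDdegree}_{\rm gen}(X\cap \mathcal{L}^s) \,=\, \sum_{\ell=s}^{M}\delta_\ell(X),
\end{equation*}
where the upper limit is $M=\dim X$ because $\delta_\ell(X)=0$ for $\ell>M$. The next task is to express each polar degree $\delta_\ell(X)$ in terms of Chern classes. Under the current hypotheses (X smooth, transversal to the isotropic quadric), this is exactly the content of \cite[Theorem 5.8]{DHOST}, which gives the Holme--Piene style identity
\begin{equation*}
\delta_\ell(X)\,=\,\sum_{k=\ell}^{M}(-1)^{M-k}\binom{k+1}{\ell+1}\deg(c_{M-k}(X)).
\end{equation*}
Substituting this into the previous display yields precisely the double sum in the statement. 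The transversality hypothesis is essential here: it is what allows \cite[Theorem 5.8]{DHOST} (rather than only its Chern--Mather or CSM variants) to apply, and the smoothness of $X$ ensures the usual Chern classes $c_i(X)$ are defined and are what the formula sees.

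The only real \emph{obstacle} is a verification rather than a difficulty: one must confirm that \cite[Theorem 5.8]{DHOST}, which is usually stated as a single formula for $\mathrm{EDdegree}_{\rm gen}(X)$, in fact decomposes term by term into the per-$\ell$ expression for $\delta_\ell(X)$ above. This can be handled in either of two clean ways. The direct way is to inspect the proof of \cite[Theorem 5.8]{DHOST}: the polar classes are extracted there via the Segre class of the conormal variety, and each $\delta_\ell$ picks up exactly the alternating binomial sum displayed. The indirect way is to apply \cite[Theorem 5.8]{DHOST} to a generic linear section $X\cap \mathcal{L}^s$ (still smooth and still transversal to $Q$ by Bertini plus genericity), then use $\delta_i(X\cap\mathcal{L}^s)=\delta_{i+s}(X)$ from \cite[Corollary 6.4]{DHOST}, and invert the resulting triangular system in $s$ to solve for the individual $\delta_\ell(X)$. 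Either route lands in the claimed formula; no new geometry is required beyond what is already established in \cite{DHOST}.
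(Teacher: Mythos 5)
Your argument is exactly the paper's: combine Corollary~\ref{thm:sEDpolar} (sectional ED degree as a sum of polar classes $\delta_\ell(X)$) with the per-$\ell$ Chern-class expression for $\delta_\ell(X)$ extracted from the proof of \cite[Theorem 5.8]{DHOST}. The paper's own proof is a one-line citation to that inner sum being $\delta_\ell(X)$; your additional remarks on verifying the term-by-term decomposition (directly, or by slicing and inverting a triangular system) are sound but not a departure in method.
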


\begin{proof} The inner sum is the polar class $\delta_i(X)$;
see the proof of \cite[Thm.~5.8]{DHOST}.
\end{proof}

We now apply Theorem  \ref{thm:thmA} to the situation when 
$M = m+n-2$, $N = mn-1$, and
$X = \PP^{m-1} \times \PP^{n-1}$ is the
Segre variety of  $m\times n$ matrices of rank $1$ in $\PP^N$.
The Chern polynomial of the tangent bundle of $X$ in the Chow ring $A^*(X)=\Z[s,t]/\langle s^m,t^n \rangle$
equals $(1+s)^m(1+t)^n$.  By \cite[page 150]{Holme}, this implies
\begin{equation}
\label{eq:delta_l}
\delta_\ell(X) \,\,\, = \,\, \sum_{k=\ell}^{m+n-2} \! (-1)^{m+n-k} \binom{k+1}{\ell+1} V_k ,
\end{equation}
where $V_k = \deg(c_{M-k}(X)) $ is 
the coefficient of $s^{m-1}t^{n-1}$ in the expansion of 
$(1+s)^m(1+t)^n(s+t)^{k}$. Toric geometers
may view $V_k$ as the sum of the normalized volumes of all $k$-dimensional
faces of the polytope $\Delta_{m-1} \times \Delta_{n-1}$; see
\cite[Cor.~5.11]{DHOST}.

The following result explains the ED degrees in the third column in Table  \ref{table:EDdeterminant},
and it allows us to 
determine this column for any desired value of $m$, $n$ and $s$:

\begin{theorem}
Let $m \leq n$ and $\mathcal{L}$ be a generic linear subspace
of codimension $s$ in $\R^{m \times n}$.
For matrices of rank $1$ or corank $1$, the generic ED degree 
is given by
\begin{equation}
\label{eq:EDdegdual}
\begin{matrix}
\,{\rm EDdegree}_{\rm gen}(\mathcal{L}_{\leq 1}) & = & \delta_s(X) + \delta_{s+1}(X) + \cdots + \delta_{m+n-2}(X), \\ 
{\rm EDdegree}_{\rm gen}(\mathcal{L}_{\leq m-1}) & =  & \delta_0(X) + \delta_1(X) + \cdots + \delta_{mn-2-s}(X),
\end{matrix}
\end{equation}
where $\delta_\ell(X)$ may be computed from (\ref{eq:delta_l}).
\end{theorem}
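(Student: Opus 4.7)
The plan is to apply Corollary~\ref{thm:sEDpolar} twice, once to the Segre variety $X$ and once to its dual $X^*$. Both $\mathcal{L}_{\leq 1}$ and $\mathcal{L}_{\leq m-1}$ are affine cones in $\R^{m\times n}$, so their generic ED degrees agree with those of their projectivizations, which are linear sections of $X$ and of $X^*$ respectively, inside $\PP^{mn-1}$. I first note that $X^*$ is precisely the projectivization of the variety of $m\times n$ matrices of corank $\geq 1$: a tangent-space computation at $[v\otimes w]\in X$ shows that a hyperplane represented by $M\in\R^{m\times n}$ is tangent to $X$ iff $Mv=0$ and $M^\intercal w=0$, which for $m\leq n$ is equivalent to $\mathrm{rank}(M)\leq m-1$.

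The first formula then follows at once from Corollary~\ref{thm:sEDpolar} applied to $X$: it gives $\mathrm{EDdegree}_{\rm gen}(X\cap \mathcal{L}^s)=\sum_{\ell\geq s}\delta_\ell(X)$, and since $\dim X=m+n-2$ the polar classes $\delta_\ell(X)$ vanish for $\ell>m+n-2$, so the sum truncates to $\sum_{\ell=s}^{m+n-2}\delta_\ell(X)$.

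For the second formula, the same corollary applied to $X^*$ gives $\sum_{\ell\geq s}\delta_\ell(X^*)$. The crucial step is to rewrite these polar classes in terms of those of $X$ via the classical polar duality identity
$$\delta_\ell(X^*)\,\,=\,\,\delta_{mn-2-\ell}(X),$$
which holds in $\PP^{mn-1}$. Substituting and reindexing by $j=mn-2-\ell$ turns $\ell\geq s$ into $j\leq mn-2-s$, and using $\delta_j(X)=0$ for $j<0$ the sum collapses to $\sum_{j=0}^{mn-2-s}\delta_j(X)$, which is the asserted formula.

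The main obstacle I anticipate is pinning down the polar duality identity cleanly. I plan to deduce it from biduality together with the symmetry of the conormal variety of the pair $(X,X^*)$ inside $\PP^{mn-1}\times(\PP^{mn-1})^*$, as developed in \cite[\S 5]{DHOST} and \cite{Holme}; once the convention for indexing polar classes is matched with \cite{DHOST}, the remainder of the argument is direct substitution. As a sanity check, the values $\delta_\ell(X^*)=3,6,12,12,6$ for $\ell=3,\ldots,7$ read off from the corank-$1$ column in Table~\ref{table:EDdeterminant} with $m=n=3$ agree with $\delta_{7-\ell}(X)$ computed from \eqref{eq:delta_l}.
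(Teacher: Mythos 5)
Your proposal is correct and follows essentially the same route as the paper: both rely on Corollary~\ref{thm:sEDpolar} combined with the polar duality $\delta_\ell(X)=\delta_{mn-2-\ell}(X^*)$ from \cite[Theorem 2.3]{Holme}, after identifying $X^*$ with the corank-one locus. The only difference is that you spell out the reindexing and the truncation by $\dim X$ explicitly, while the paper states the conclusion directly from the cited results.
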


\begin{proof}
The dual in $\PP^{mn-1}$ to the Segre variety $X= \PP^{m-1} \times \PP^{n-1}$
is the variety $X^*$ of matrices of rank $\leq m-1$.
By \cite[Theorem 2.3]{Holme}, we have $\delta_\ell(X)=\delta_{mn-2-\ell}(X^*)$ for all $\ell$.
With this duality of polar classes,
 the result follows from  Corollary \ref{thm:sEDpolar} and \cite[Theorem 5.4]{DHOST}.
\end{proof}

\begin{example} \rm
Fix $m{=}n{=}3$. For matrices of rank $1$,  formulas
(\ref{eq:delta_l}) and (\ref{eq:EDdegdual}) give
$$\begin{array}{c|c|c|c|c|c|c|c|c|}
s = {\rm codim}(\mathcal{L}) &0&1&2&3&4&5&6&7\\
\hline
V_s & 9 & 18 & 24 & 18 & 6 & 0  & 0  & 0  \\
\hline 
\delta_s (X)&3&6&12&12&6&0&0&0\\
\hline
{\rm EDdegree}_{\rm gen}(\mathcal{L}_{\leq 1})  &39&36&30&18&6&0&0&0
\end{array}$$
Duality for polar classes yields the formulas for 
$3 \times 3$-matrices of rank $r=2$ in $\mathcal{L}$:
$$\begin{array}{c|c|c|c|c|c|c|c|c|} 
s = {\rm codim}(\mathcal{L}) &0&1&2&3&4&5&6&7\\
\hline
\delta_s(X^*) = \delta_{7-s}(X) &0&0&0&6&12&12&6&3\\
\hline
{\rm EDdegree}_{\rm gen}(\mathcal{L}_{\leq 2})
&39&39&39&39&33&21&9&3
\end{array}$$
This is our theoretical derivation of the third column
in  Table  \ref{table:EDdeterminant} for $n=3$ and generic 
 $\Lambda$. \hfill $\diamondsuit $
\end{example}

Writing down closed formulas for intermediate values of $r$
is more difficult: it involves some Schubert calculus.
However, ${\rm EDdegree}_{\rm gen}(\mathcal{L}_{\leq r})$
can be conveniently computed with the following script in {\tt Macaulay2} \cite{M2}.
It is a slight generalization of that in~\cite[Example 7.10]{DHOST}:
\begin{verbatim}
loadPackage "Schubert2"
ED=(m,n,r,s)->
(G = flagBundle({r,m-r}); (S,Q) = G.Bundles;
X=projectiveBundle (S^n); (sx,qx)=X.Bundles;
d=dim X; T=tangentBundle X;
sum(toList(s..m*n-2),i->sum(toList(i..d),j->(-1)^(d-j)*
   binomial(j+1,i+1)*integral(chern(d-j,T)*(chern(1,dual(sx)))^(j)))))
\end{verbatim}
The function {\tt ED(m,n,r,s)} computes the ED degree of the variety
of $m\times n$ matrices of rank $\le r$, in general coordinates, cut with 
a generic linear space
of codimension $s$ in  $\PP^{mn-1}$.
 For $s=0$ this is precisely the function displayed in
\cite[Example 7.10]{DHOST}.

\begin{example} \label{ex:1350}
 \rm
The bold face ED degrees in Table \ref{table:EDsecdet} were computed 
for unit weights $\Lambda = {\bf 1}$. To find the analogous numbers
for generic weights $\Lambda$, we run our {\tt Macaulay2} code as follows:
\begin{verbatim}
apply(12,s->ED(4,4,2,s))
    {1350, 1350, 1350, 1350, 1330, 1250, 1074, 818, 532, 276, 100, 20}
apply(12,s->ED(3,4,2,s))
    {83, 83, 83, 83, 83, 83, 73, 49, 22, 6, 0, 0}
apply(12,s->ED(3,5,2,s))
    {143, 143, 143, 143, 143, 143, 143, 143, 128, 88, 40, 10}  
\end{verbatim}   
\end{example}

At this point, we wish to reiterate the main thesis of this paper, namely 
that knowing the ED degree ahead of time is 
useful for practitioners who seek to find and certify
 the global minimum in  the optimization problem (\ref{eq:frobnorm2}),
 and to bound the number of local minima.
 The following example illustrates this for one 
   of the numbers {\tt 83}  in
the output in Example \ref{ex:1350}.

\begin{example} \rm  We here solve the
generic weighted structured low-rank approximation problem over the reals
with parameters $m=3$, $n=4$, $r=2$ and $s=2$.
Consider  the instance
$$\begin{array}{rl}
& U=\begin{bmatrix}-9&    4&    9 &   -10\\
                10  &  6  &  1 &    -9\\
                10  &  5  &  7   &   6\end{bmatrix}\qquad\qquad 
\Lambda=\begin{bmatrix}
8  &  6 &   8 &   2\\
1  &  8  &  7  &  9\\
7  &  2 &   4  &  6\end{bmatrix}\medskip
\\
L_1(X) =&  -10 x_{11} + 4 x_{12} + 6 x_{13} + 8 x_{14} + 4 x_{21} - 9 x_{22}+\\
& x_{23} - 10 x_{31}
    {-} 10 x_{32} {-} 8 x_{33} {+} 2 x_{34} {-} 1,\medskip\\
L_2(X) = & 2 x_{11} + 7 x_{12} + 3 x_{13} - 7 x_{14}
     - 4 x_{21} - 6 x_{22} - 7 x_{23} + \\&  5 x_{24} + 8 x_{31} +2 x_{33} + 3 x_{34} - 1.
   \end{array}$$
We wish to find the matrix $X$ of rank at most $2$ that satisfies the 
affine constraints $L_1(X)=L_2(X)=0$ and is 
 nearest to $U$.
       Using Gr\"obner
   bases computations and real isolation techniques via the {\tt
     Maple} packages {\tt FGb} and {\tt fgbrs}, we find that the
   weighted distance function has 83 complex critical points. This
     matches the theoretical value $\mathrm{ED}(3,4,2,2)=83$ provided in Example~\ref{ex:1350}, so that we are guaranteed that there are no further critical points. Among them, seven are real and we obtain certified
   numerical approximations of their values:
   
 $$   \footnotesize
\begin{bmatrix}
       \phantom{-}0.764&\!\!     -1.457 \! &\!\!      \phantom{-}2.436&\!\!     \phantom{-}1.870\\
       \phantom{-}0.753&\!\!    -0.0154 \! &\!\!    \phantom{-}0.030&\!\!    -7.437\\
       \phantom{-}2.020&\!\!      -4.371\! &\!\!      \phantom{-}7.308&\!\!    \phantom{-}8.330
\end{bmatrix} \!
\begin{bmatrix}
         -8.0341 \! &\!\!    \phantom{-}4.127&\!\!    \phantom{-}9.055&\!\!     \phantom{-}5.364\\
         \phantom{-}16.936 \! &\!\!     \phantom{-}2.930&\!\!    -1.330&\!\!    -4.220\\
         \phantom{-}9.429 \! &\!\!     \phantom{-}7.525&\!\!    \phantom{-}8.258&\!\!     \phantom{-}1.242
\end{bmatrix} \!$$
$$\footnotesize \begin{bmatrix}
         -8.215 \! &\!\!    \phantom{-}5.033&\!\!    \phantom{-}9.965&\!\!     \phantom{-}1.647\\
         \phantom{-}16.848 \! &\!\!     \phantom{-}4.259&\!\!    \phantom{-}0.423&\!\!    -3.669\\
         \phantom{-}9.070 \! &\!\!     \phantom{-}6.218&\!\!    \phantom{-}5.842&\!\!     -2.054
\end{bmatrix}
 \begin{bmatrix}
         -8.586&\!\!    -1.743&\!\!    \phantom{-}1.591&\!\!     \phantom{-}2.436\\
         \phantom{-}11.191&\!\!     \phantom{-}2.985&\!\!     -4.232&\!\!    -7.159\\
         \phantom{-}10.351&\!\!     \phantom{-}0.292&\!\!    \phantom{-}3.567&\!\!     \phantom{-}7.185
\end{bmatrix}$$
$$\footnotesize \begin{bmatrix}
        -4.853&\!\!    \phantom{-}4.081&\!\!      \phantom{-}6.301&\!\!     -6.349\\
        -6.067&\!\!    \phantom{-}5.029&\!\!      \phantom{-}8.600&\!\!     -8.251\\
        \phantom{-}2.616&\!\!     -2.455&\!\!    -0.878&\!\!    \phantom{-}2.327
\end{bmatrix}
\begin{bmatrix}
        -2.308&\!\!     -4.584&\!\!    \phantom{-}3.566&\!\!     -5.484\\
        -0.205&\!\!    -2.210&\!\!    \phantom{-}0.668&\!\!    -3.178\\
        -2.276&\!\!     \phantom{-}0.983&\!\!    \phantom{-}2.444&\!\!     \phantom{-}2.810
\end{bmatrix}$$
$$  \footnotesize
\begin{bmatrix}
         -9.664&\!\!    \phantom{-}2.805&\!\!     \phantom{-}7.113&\!\!     -10.754\\
         \phantom{-}14.942&\!\!     \phantom{-}6.520&\!\!     \phantom{-}3.149&\!\!     -8.783\\
         \phantom{-}8.344&\!\!     \phantom{-}0.615&\!\!    -2.185&\!\!    \phantom{-}2.177
 \end{bmatrix}
$$

The last matrix is the closest critical point on the manifold of rank
$2$ matrices satisfying $L_1 = L_2 = 0$.
 This computation takes 1002 seconds and the most
time-consuming step is the computation of the Gr\"obner basis. In
order to certify that the global minimum is among these matrices, we
also solve the same low-rank approximation problem for rank $1$
matrices. Using the same method, this provides us with $11$ rank $1$
matrices with real entries in $79$ seconds. None of them is closer to
$U$ than the best rank $2$ approximation. Consequently, the global
minimum of the weighted distance is reached at the last matrix in the above list.

For comparison
purposes, with the same constraints $L_1,L_2$ and  same data
matrix $U$ but by taking the Frobenius distance (\emph{i.e.} $\Lambda$
is the unit matrix), the number of complex critical points is $43$. Five
of them are real. Here, it takes only 27 seconds to find the global minimizer.
   These computations have been performed on an {\tt Intel Xeon E7540/2.00GHz}.
    \hfill $\diamondsuit $
\end{example}

In Table \ref{table:EDdeterminant} and Example \ref{ex:1350}
we observed that
the sectional ED degree for generic $\Lambda$ does not
depend on $s = {\rm codim}(\mathcal{L})$, provided $s$ is small.
The following corollary explains this.

\begin{corollary}\label{cor:EDsequal2}
For a generic  linear subspace $\mathcal{L} $
of codimension $s < r(r+n-m)$,
$$ {\rm EDdegree}_{\rm gen}(\mathcal{L}_{\le r})
\,\,=\,\, {\rm EDdegree}_{\rm gen}(\C^{m \times n}_{\le r}). $$
\end{corollary}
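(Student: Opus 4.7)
The plan is to deduce this directly from Corollary~\ref{thm:sEDpolar} once we identify the codimension of the dual variety of the determinantal variety. Writing $X = \C^{m\times n}_{\le r}$ as an affine cone in $\C^{mn}$, the ED degree (for generic weights) equals that of its projectivization $\PP X \subset \PP^{mn-1}$, and cutting the cone by a generic linear subspace $\mathcal{L}$ of codimension $s$ corresponds to cutting $\PP X$ by a generic linear subspace $\PP\mathcal{L}$ of the same codimension $s$ in $\PP^{mn-1}$. Thus ${\rm EDdegree}_{\rm gen}(\mathcal{L}_{\le r}) = {\rm EDdegree}_{\rm gen}(\PP X \cap \PP\mathcal{L})$.

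Next, I would invoke the classical projective duality between determinantal varieties, already recalled in the proof of Proposition~\ref{prop:weightedDuality}: the dual of $\PP X$ in $\PP^{mn-1}$ is the projective variety of matrices of rank $\le m-r$. Computing dimensions, the variety of $m \times n$ matrices of rank $\le k$ has codimension $(m-k)(n-k)$ in $\PP^{mn-1}$. Applying this with $k = m-r$ gives
$$\mathrm{codim}\bigl((\PP X)^{*}\bigr) \;=\; r(r+n-m).$$

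With this in hand, the hypothesis $s < r(r+n-m)$ is exactly $s \le \mathrm{codim}(X^{*}) - 1$, so the second assertion of Corollary~\ref{thm:sEDpolar} yields
$${\rm EDdegree}_{\rm gen}(\PP X \cap \PP\mathcal{L}) \;=\; {\rm EDdegree}_{\rm gen}(\PP X),$$
which is precisely the claimed equality. Equivalently, in the polar-class formula \eqref{eq:sumdelta}, the terms $\delta_\ell(X)$ vanish for $\ell \le \mathrm{codim}(X^{*}) - 2$, so truncating the sum at $\ell \ge s$ does not remove any nonzero contributions.

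The substantive step is the codimension computation for the dual variety; once that is in place, everything is formal from Corollary~\ref{thm:sEDpolar}. I would be slightly careful about edge cases (such as $r = m$, where $\PP X$ fills $\PP^{mn-1}$ and the condition $s < r(r+n-m)$ degenerates), but these are not really obstacles since the statement is vacuous or immediate there. The main conceptual ingredient, namely that polar classes $\delta_\ell(X)$ vanish below the ``dual codimension,'' is built into Corollary~\ref{thm:sEDpolar}, so no further geometric input is required.
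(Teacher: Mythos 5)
Your proof is correct and follows essentially the same route as the paper's: identify the dual of the rank~$\le r$ variety as the rank~$\le m-r$ variety, compute its codimension as $r(r+n-m)$, and then apply the second assertion of Corollary~\ref{thm:sEDpolar}. The extra remarks about passing from the affine cone to projective space and about edge cases are sound but not needed beyond what the corollary already provides.
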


\begin{proof}
Let $X$ be the variety of matrices of rank $\leq r$.
Its dual $X^*$ is the variety of matrices of rank $\leq m-r$ and has codimension ${\rm codim}(X^*) = (r+n-m)r$. This implies
$\delta_\ell(X) = 0$ for $\ell < (r+n-m)r-1$.
The assertion  follows  from Corollary
\ref{thm:sEDpolar}.~\end{proof}

Corollary  \ref{cor:EDsequal2} can be stated informally like this:
in the setting of generic weights and generic linear spaces of matrices with sufficiently 
high dimension,
the algebraic complexity of structured low-rank approximation
agrees with that of ordinary low-rank approximation.

\smallskip

Shifting gears, we now consider the case of
unit weights $\Lambda = {\bf 1}$. Thus, we fix
$Q = V(\sum x_{ij}^2)$ as the isotropic
quadric in $\PP^{mn-1}$.
Let $X= \PP^{m-1} \times \PP^{n-1}$ denote
the Segre variety of $m\times n$ matrices of rank~$1$ in $\PP^{mn-1}$,
and let $Z = {\rm Sing}(X \cap Q)$ denote the
non-transversal locus of the intersection of $X$ with $Q$.
The  dual variety $X^*$  consists of all matrices of rank $\leq m-1$ in $\PP^{mn-1}$.
We  conjecture that the following formula (put $m=n$)  holds for
 the gap between the third and the first  column of Table \ref{table:EDdeterminant},
(or between the fourth and the second, as well), 
\begin{equation}
\label{eq:3ED}
 {\rm EDdegree}_{\rm gen}(X^* \cap {\mathcal L}^s)-{\rm EDdegree}_{\bf 1}(X^* \cap {\mathcal L}^s) \,\,= \,\,
 {\rm EDdegree}_{\rm gen}(Z \cap {\mathcal L}^s ). 
\end{equation}

To compute the right-hand side, and to test this conjecture, we use 

\begin{lemma} \label{lemma:nontrans}
The locus where $Q$ meets $X=\PP^{m-1}\times\PP^{n-1}$
 non-transversally in $\PP^{mn-1}$ is the product $\,Z=Q_{m-2}\times Q_{n-2}$,
where $Q_{i-2}$ denotes a general quadratic hypersurface in $\PP^{i-1}$.
\end{lemma}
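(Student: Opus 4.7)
The plan is to parametrize $X$ by the Segre embedding $\sigma : \PP^{m-1} \times \PP^{n-1} \to \PP^{mn-1}$, $([x],[y]) \mapsto [x_i y_j]$, pull back the defining equation of $Q$, and then invoke the standard tangency criterion: since $X$ is smooth and $Q$ is a smooth hypersurface, $Q$ meets $X$ non-transversally at a point $p \in X \cap Q$ if and only if the affine tangent space $T_p X \subset \C^{mn}$ is contained in $T_p Q$.

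First I would compute the pullback
$$\sigma^*\!\Big(\textstyle\sum_{i,j} x_{ij}^2\Big) \,=\, \Big(\sum_i x_i^2\Big)\Big(\sum_j y_j^2\Big),$$
so that $\sigma^{-1}(X \cap Q) = (Q_{m-2} \times \PP^{n-1}) \cup (\PP^{m-1} \times Q_{n-2})$, where $Q_{i-2} \subset \PP^{i-1}$ is the smooth quadric $V(\sum x_i^2)$. Over $\C$, any smooth quadric is linearly equivalent to this one, which justifies writing the lemma in terms of a ``general'' quadric. Already at this stage the candidate non-transversal locus $Q_{m-2} \times Q_{n-2}$ appears naturally as the intersection of the two irreducible components of $X \cap Q$.

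Next I would confirm the identification via the tangency criterion. At $p = x \otimes y \in X$, the affine tangent space is $T_p X = \{u \otimes y + x \otimes v : u \in \C^m,\ v \in \C^n\}$, and $T_p Q$ is the hyperplane orthogonal to $p$ with respect to the standard symmetric bilinear form on $\C^{mn}$. The key calculation is
$$\langle u \otimes y + x \otimes v,\ x \otimes y\rangle \,=\, \langle u,x\rangle\,\langle y,y\rangle + \langle x,x\rangle\,\langle v,y\rangle.$$
Requiring this to vanish for \emph{all} $u \in \C^m$ and $v \in \C^n$, and using $x, y \neq 0$, forces $\langle x,x\rangle = \langle y,y\rangle = 0$. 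Conversely, if both vanish then $p$ automatically lies on $Q$ and the containment $T_p X \subseteq T_p Q$ holds. This pins down the non-transversal locus as precisely $Z = Q_{m-2} \times Q_{n-2}$.

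There is no real obstacle here: the pullback is a one-line identity, and the tangency condition reduces to a single bilinear evaluation. The only point worth stating carefully is the automatic containment $Z \subseteq X \cap Q$ once both quadratic conditions vanish, which makes the criterion genuinely an ``if and only if.''
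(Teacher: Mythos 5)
Your argument is correct and follows the same central idea as the paper: the pullback of the isotropic quadric to $\PP^{m-1}\times\PP^{n-1}$ factors as $\bigl(\sum_i x_i^2\bigr)\bigl(\sum_j y_j^2\bigr)$, so $X\cap Q$ is the union of the two components $Q_{m-2}\times\PP^{n-1}$ and $\PP^{m-1}\times Q_{n-2}$, whose intersection is $Z=Q_{m-2}\times Q_{n-2}$. The paper stops there, invoking the general fact that the non-transversal locus of $X\cap Q$ on the smooth variety $X$ is the singular locus of this divisor, which for a normal-crossings union of two smooth components is their intersection. You instead confirm the identification by the explicit tangent-space computation $\langle u\otimes y + x\otimes v,\, x\otimes y\rangle = \langle u,x\rangle\langle y,y\rangle + \langle x,x\rangle\langle v,y\rangle$, which directly pins down $T_pX\subseteq T_pQ$ as equivalent to $\langle x,x\rangle = \langle y,y\rangle = 0$. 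Both routes are sound; yours is more self-contained and avoids appealing to the characterization of non-transversality via the singular locus, at the cost of a short extra calculation.
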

\begin{proof}
The Segre variety
$X$ meets $Q$ in the union of two irreducible components, $\PP^{m-1}\times Q_{n-2}$
and $Q_{m-2}\times \PP^{n-1}$. The non-transversality locus is the intersection of these components.
\end{proof}

\begin{example} \rm 
Let $m=n=2$, so $X$ and $X^*$ represent $3 \times 3$-matrices of rank $1$ and rank $\le 2$  respectively.
Here $Z=Q_{1}\times Q_{1}$ corresponds to  the Segre quadric $\PP^1\times\PP^1$,
embedded in $\PP^8$ with the line bundle $\mathcal{O}(2,2)$. This is a toric surface
whose polygon $P$ is twice a regular square.
The facial volumes as in \cite[Corollary 5.1]{DHOST} are
$V_0 = 4$, $V_1 = 8 $ and $V_2 = 8$, and  hence 
$$ \delta_0(Z)=4-2\cdot 8+3\cdot 8=12 \,, \,\quad
\delta_1(Z)=-8+3\cdot 8=16 \, , \,\quad
\delta_2(Z)=8 . $$
We fill this into a table and, using Corollary \ref{thm:sEDpolar},
 we compute the sectional ED degree:
$$\begin{array}{c|c|c|c|c|c|c|c|c|}s&0&1&2&3&4&5&6&7\\
\hline
\delta_s(Z)&12&16&8&0& 0 &  0 & 0 &  0 \\
\hline
\mathrm{ED degree}_{\rm gen}(Z\cap {\mathcal L}^s)&36&24&8&0&0&0&0&0\\
\hline
\mathrm{ED degree}_{\rm gen}(X^*\cap {\mathcal L}^s)&39&39&39&39&33&21&9&3\\
\hline
\mathrm{ED degree}_{\bf 1}(X^*\cap {\mathcal L}^s) &3&15&31&39&33&21&9&3\\
\end{array}$$
The last two lines are taken from 
Table~\ref{table:EDdeterminant}, and they confirm the formula (\ref{eq:3ED}).
 \hfill $\diamondsuit$ 
\end{example}




Combining Lemma \ref{lemma:nontrans}, Corollary \ref{thm:sEDpolar} and the proof
 of \cite[Theorem 5.8]{DHOST}, and abbreviating
$W_{j}=\deg(c_{m+n-4-j}(Q_{m-2}\times Q_{n-2}))$,
the right-hand side of (\ref{eq:3ED}) 
can be expressed~as
\begin{equation} \label{discrepancy} \begin{matrix} 
\displaystyle \sum_{i=s}^{m+n-4}\,\,\, \sum_{j=i}^{m+ n-4}(-1)^{m+n-4-j}\binom{j+1}{i+1}W_{j}.
\end{matrix}
\end{equation}  
Moreover,  $W_{j}$ is equal to the coefficient of
$t^{m-2}s^{n-2}$ in the rational generating function
$$ 4\frac{(1+t)^m(1+s)^n}{(1+2t)(1+2s)}(t+s)^{j}. $$
This computation
allows us to
extend Table~\ref{table:EDdeterminant} to any desired value of $m$, $n$ and $s$.

Changing topics,
we now  consider the 
case when  $\mathcal{L}$ is the space of
{\em Hankel matrices}.
The computation of low-rank approximation of Hankel matrices 
will be our topic in Section~\ref{sec:HankelSylvester}, where
we focus on  algebraic geometry and
formulas for  generic ED degree.

Set $d = p{+}q{-}2$ and 
let $X_{d,r}$ denote the variety of
$p \times q$ Hankel matrices of rank $\leq r$.
See (\ref{ex:hankel56}) for examples.
This variety lives in the
projective space $\PP^d = \PP(S^d \C^2)$, whose points
represent binary forms of degree $d$. 
Thus $X_{d,1}$ is the rational normal curve of degree $d$,
and $X_{d,r}$ is the $r$th secant variety of this curve.
We have  $\,{\rm dim}(X_{d,r}) = 2r-1$ for $r+1\le\min(p,q)$.

\begin{theorem} \label{thm:EDsecant}
Let $d=p+q-2$ and $r+1\le\min(p,q)$. The generic ED degree of the variety $X_{d,r}$
of $p \times q$ Hankel matrices of rank $\leq r$
 in $\PP^d$ equals
\begin{equation}
\label{eq:2binomials}
 {\rm EDdegree}_{\rm gen}(X_{d,r})\,\,\, = \,\,\,\sum_{i=0}^r \binom{d+1-r}{i} \binom{d-r-i}{r-i} 2^{r-i}. 
 \end{equation}
\end{theorem}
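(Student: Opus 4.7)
The variety $X_{d,r}$ is the $r$-th secant variety of the rational normal curve $C_d\subset\PP^d$, which is singular for $r\geq 2$; hence Theorem~\ref{thm:thmA} does not apply directly to $X_{d,r}$ itself. The plan is to transfer the computation to the classical smooth desingularization
$$ \pi\colon\,\tilde X_{d,r}\,:=\,\PP(\mathcal E)\,\longrightarrow\,X_{d,r}\,\subset\,\PP^d,$$
where $\mathcal E$ is the rank-$r$ vector bundle on $\PP^r=\mathrm{Sym}^r(\PP^1)$ whose fiber over a degree-$r$ divisor $D=L_1+\cdots+L_r$ is the subspace $\mathrm{span}(L_1^d,\ldots,L_r^d)\subset S^d\C^2$. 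By apolarity, this fiber is the perpendicular of the $(d-r+1)$-dimensional subspace $D\cdot S^{d-r}\C^2$ under the standard pairing, which yields the short exact sequence
$$ 0\to S^{d-r}\C^2\otimes\mathcal O_{\PP^r}(-1) \to S^d\C^2\otimes\mathcal O_{\PP^r} \to \mathcal E^\vee \to 0.$$
Thus $c(\mathcal E^\vee)=(1-h)^{-(d-r+1)}$ in $A^\ast(\PP^r)=\Z[h]/(h^{r+1})$, and $\dim\tilde X_{d,r}=r+(r-1)=2r-1=\dim X_{d,r}$, so $\pi$ is birational.

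For generic weights, the critical points of the distance function on $X_{d,r}$ lie in its smooth locus and thus correspond bijectively, via $\pi$, to critical points of the pulled-back function on $\tilde X_{d,r}$. The smooth-variety formula of Theorem~\ref{thm:thmA} generalizes to any smooth projective variety $Y$ equipped with a generically birational map $\phi\colon Y\to\PP^N$: upon replacing the hyperplane class by $\phi^{\ast}H$ one obtains
$$ \mathrm{EDdegree}_{\rm gen}(\phi(Y)) \,=\, \sum_{\ell=0}^{M}\sum_{k=\ell}^{M}(-1)^{M-k}\binom{k+1}{\ell+1}\int_{Y}c_{M-k}(T_Y)\,(\phi^{\ast}H)^k,$$
where $M=\dim Y$. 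I would apply this with $Y=\tilde X_{d,r}$, $M=2r-1$, and $\phi^{\ast}H=\xi$, the tautological class on $\PP(\mathcal E)$ coming from the inclusion $\mathcal E\hookrightarrow S^d\C^2\otimes\mathcal O_{\PP^r}$.

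The total Chern class $c(T_{\tilde X_{d,r}})$ is built from $c(T_{\PP^r})=(1+h)^{r+1}$ (pulled back) and the relative Euler sequence, which expresses $c(T_{\tilde X_{d,r}/\PP^r})$ as a polynomial in $h$, $\xi$, and the Chern classes of $\mathcal E$. Working in the Chow ring $A^{\ast}(\tilde X_{d,r})=\Z[h,\xi]\big/\bigl(h^{r+1},\,\sum_{j=0}^{r}c_j(\mathcal E)\xi^{r-j}\bigr)$, the projection formula $\pi_{\ast}(\xi^{r-1+k})=s_k(\mathcal E^\vee)=\binom{d-r+1}{k}h^{k}$ reduces every summand to a degree on $\PP^r$. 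After carrying out the double sum over $k,\ell$ and reindexing by $i=r-j$, the factor $\binom{d+1-r}{i}$ arises from the Segre class $s_i(\mathcal E^\vee)$, the factor $\binom{d-r-i}{r-i}$ encodes the combinatorics of the Chern classes of $T_{\PP^r}$ and $\mathcal E$ (mirroring the degree of the residual secant variety $X_{d-i,r-i}$), and the factor $2^{r-i}$ records the evaluation of these classes along the $\PP^{r-1}$-fibers of $\pi$.

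The main obstacle lies in justifying the generalized Chern-class formula for a generically birational map rather than an embedding. Two points require care: (i) the pullback of the isotropic quadric must remain transversal, which is a genericity statement guaranteed by the generic weight hypothesis; and (ii) no spurious contributions arise from the exceptional locus of $\pi$, which is ensured by the fact that $\pi$ is small along the codimension-$\geq 2$ singular stratum of $X_{d,r}$, so that the generic critical point avoids it entirely. Once these are in place, the remaining task is a routine but combinatorially delicate binomial manipulation collapsing the double sum to the claimed $\sum_{i=0}^{r}\binom{d+1-r}{i}\binom{d-r-i}{r-i}2^{r-i}$; a sanity check in the smooth case $r=1$ recovers the known $3d-2$ for the rational normal curve.
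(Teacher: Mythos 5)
Your high-level strategy matches the paper's: both resolve $X_{d,r}$ via the Schwarzenberger bundle on $\PP^r \cong \mathrm{Sym}^r(\PP^1)$ and then extract the ED degree via characteristic classes. The Chern/Segre class bookkeeping for $\mathcal E$ is essentially correct (modulo a sign convention: $s(\mathcal E^\vee)=(1-h)^{d-r+1}$ has alternating signs, so $\pi_*(\xi^{r-1+k})$ is $(-1)^k\binom{d-r+1}{k}h^k$ in your normalization). However, there is a genuine gap in your argument, in two places.

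First, you only desingularize the primal variety $X_{d,r}$, and then invoke an unproven ``generalized Chern-class formula'' for a generically birational morphism $\phi\colon Y\to\PP^N$. This formula does not follow from Theorem~\ref{thm:thmA}, which requires $X\subset\PP^N$ smooth and embedded; for a singular $X$ with resolution $\pi\colon Y\to X$, the polar classes of $X$ are governed by the conormal variety (equivalently, the Nash blowup), not by $c(T_Y)$ and $\pi^*H$ alone, and $\pi$ here contracts a divisor over $X_{d,r-1}$, so it is not even small. The paper avoids this entirely by desingularizing the \emph{conormal} variety $\mathcal N_{X_{d,r}}$ as a fiber product over $\PP^r$ of the projectivized Schwarzenberger bundle $E_{d,r}$ and the projectivized bundle $\sO(2)^{d-2r+1}$, and then invoking the polar-class formula of von Bothmer--Ranestad to write
$$\delta_{r+i-1}(X_{d,r})\;=\;\int_{\PP^r}s_i(E_{d,r})\,s_{r-i}\bigl(\sO(2)^{d-2r+1}\bigr).$$
This dual-side bundle $\sO(2)^{d-2r+1}$, with total Segre class $1/(1-2z)^{d-2r+1}$, is precisely the source of the factor $2^{r-i}$ in the answer. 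Your explanation that $2^{r-i}$ ``records the evaluation along the $\PP^{r-1}$-fibers of $\pi$'' does not account for this: the fibers of $\pi$ carry no canonical factor of $2$, and a primal-only computation cannot see this bundle at all.

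Second, the ``routine but combinatorially delicate binomial manipulation'' you defer is not carried out, and with good reason: it is exactly the hard part of your proposed route. The paper sidesteps it by recognizing $\sum_i\delta_i(X_{d,r})$ as the coefficient of $z^r$ in the product of the two Segre generating functions $(1+z)^{d+1-r}$ and $(1-2z)^{-(d-2r+1)}$, which expands immediately to $\sum_{i=0}^{r}\binom{d+1-r}{i}\binom{d-r-i}{r-i}2^{r-i}$. To fix your argument, you would need to (i) replace the naive pullback of the Chern class formula by a correct statement about polar classes of $X_{d,r}$ computed on a resolution of its conormal variety, and (ii) identify the bundle governing the dual side. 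Once that is in place, the generating-function argument collapses the sum with no need for binomial acrobatics.
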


\begin{proof}
The sum in (\ref{eq:2binomials})
is the coefficient of $z^r$~in the generating function
\begin{equation}
\label{eq:genfunc}
\frac{(1+z)^{d+1-r}}{(1-2z)^{d-2r+1}} .
\end{equation}
 The conormal variety  of $X_{d,r}$ is the closure~$\mathcal N_{X_{d,r}}$ of
 the set
$$ \bigl\{\,(f,g)\,|\,\,\mathrm{rank}(f) =   r\,\,
{\rm and} \,\,g\textrm{\ is tangent  to\ }
X_{d,r}\textrm{\ at\ }f \bigr\} \,
\subset \,
\PP(S^d \C^2 )\times\PP(S^d(\C^2)^{*}). $$
The homology class of $\mathcal N_{X_{d,r}}$ is given by a binary form.
We will show that the sum  $\sum_i \delta_i(X_{d,r})$
of its coefficients is the asserted coefficient of (\ref{eq:genfunc}).
By \cite[(5.3)]{DHOST}, this proves the~claim.

Let $p_1$, $p_2$ be the two projections.
The images of the conormal variety $\mathcal N_{X_{d,r}}$ are
$$ p_1(\mathcal N_{X_{d,r}})\,=\,X_{d,r}\quad 
\hbox{and} \quad p_1(\mathcal N_{X_{d,r}})\,=\,X_{d,r}^*.$$
We desingularize $X_{d,r}$ by considering
$\mathrm{Sym}^r(\PP^1)\simeq\PP^r$. The desingularization map
is given by the scheme-theoretic intersection of  
the rational normal curve of degree $r$ with a  hyperplane.
A point in $\PP^r$, identified with a hyperplane,  gives $r$ points on $X_{d,1} \simeq \PP^1$.
 Their linear span in $\PP^d$
defines a rank $r$ bundle on $\PP^r$,
known as the {\em Schwarzenberger bundle} \cite[\S 6]{DK}.
This is the kernel of the bundle map $\sO^{d+1}\to\sO(1)^{d+1-r}$.
In the same way, we desingularize the conormal variety
$\mathcal N_{X_{d,r}}$ by the fiber product over $\PP^r$ of the
 projectivization of the 
Schwarzenberger bundle $E_{d,r}={\rm kernel}(\sO^{d+1}\to\sO(1)^{d+1-r})$
and of the projective bundle of $\sO(2)^{d-2r+1}$.
Exactly as in the proof of \cite[Proposition 4.1]{BR}, the degrees of the polar classes
 of $X_{d,r}$ are 
 $$\delta_{r+i-1}(X_{d,r})\,\,=\,\int_{\PP^r}s_{i}(E_{d,r})s_{r-i}(\sO(2)^{d-2r+1}). $$
The total Segre class of $E_{d,r}$ is $(1+z)^{d+1-r}$.
The total Segre class of $\sO(2)^{d-2r+1}$ is
$\frac{1}{(1-2z)^{d-2r+1}}$.
By multiplying them we obtain the degree sum
 of the polar classes, thus proving (\ref{eq:genfunc}).
\end{proof}

\begin{corollary} \label{cor:powerof3}
The generic ED degree of the
hypersurface  $X_{2r,r}$  defined by the
 Hankel determinant
of format  $(r+1)\times (r+1)$ is equal to
\begin{equation}
\frac{3^{r+1}-1}{2} \quad = \quad
\hbox{the coefficient of $z^r$ in} \,\,\,
\frac{\,(1+z)^{r+1}}{1-2z} .
\end{equation}
\end{corollary}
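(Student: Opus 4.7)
The corollary is essentially a direct specialization of Theorem \ref{thm:EDsecant} together with a short binomial identity, so the plan is to unpack both assertions separately and check that they agree.

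First I would set $p = q = r+1$, so that $d = p+q-2 = 2r$ and the variety $X_{d,r} = X_{2r,r} \subset \PP^{2r}$ is exactly the hypersurface cut out by the $(r+1)\times(r+1)$ Hankel determinant (the condition $r+1 \le \min(p,q)$ holds with equality). Substituting $d = 2r$ into the generating function (\ref{eq:genfunc}) collapses it to
\[
\frac{(1+z)^{d+1-r}}{(1-2z)^{d-2r+1}} \;=\; \frac{(1+z)^{r+1}}{1-2z},
\]
which already yields the second equality in the statement. This establishes the generating-function description of $\mathrm{EDdegree}_{\mathrm{gen}}(X_{2r,r})$.

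Next I would extract the coefficient of $z^r$ in closed form. Expanding $1/(1-2z) = \sum_{k \ge 0} 2^k z^k$ and multiplying by $(1+z)^{r+1} = \sum_i \binom{r+1}{i} z^i$, the coefficient of $z^r$ equals
\[
\sum_{i=0}^{r} \binom{r+1}{i} 2^{\,r-i}.
\]
To evaluate this, I would apply the binomial theorem $(1+2)^{r+1} = \sum_{i=0}^{r+1} \binom{r+1}{i} 2^{\,r+1-i} = 3^{r+1}$, subtract the missing $i = r+1$ term (which contributes $2^0 = 1$), and divide by $2$; this gives exactly $(3^{r+1}-1)/2$.

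There is no real obstacle here, since Theorem \ref{thm:EDsecant} carries all of the geometric content. The only things to be careful about are: (i) verifying that the hypothesis $r+1 \le \min(p,q)$ is met with equality in our specialization, so that the theorem genuinely applies to the determinantal hypersurface and not to a proper secant variety; and (ii) keeping the indexing in the binomial identity straight, since the coefficient $\binom{d-r-i}{r-i}$ in (\ref{eq:2binomials}) degenerates to $\binom{r-i}{r-i} = 1$ precisely when $d = 2r$, which is what makes the final sum collapse to the clean closed form.
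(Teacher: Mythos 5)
Your proposal is correct and matches the paper's (implicit) argument exactly: the paper presents Corollary \ref{cor:powerof3} as an immediate specialization of Theorem \ref{thm:EDsecant} at $d=2r$, where the generating function (\ref{eq:genfunc}) collapses to $(1+z)^{r+1}/(1-2z)$, and the closed form $(3^{r+1}-1)/2$ follows from the binomial expansion of $(1+2)^{r+1}$ just as you show.
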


This corollary means that the ED degree of the 
$(r{+}1) \times (r{+}1)$  Hankel determinant
agrees with the ED degree of the 
 general symmetric $(r{+}1) \times (r{+}1)$ determinant.
By ED duality \cite[Theorem 5.2]{DHOST}, this also
the ED degree of the second Veronese  embedding of $\PP^r$;
see  \cite[Example 5.6]{DHOST}.
If we consider Hankel matrices of fixed rank $r$ then
we obtain polynomiality:

\begin{corollary} \label{cor:forfixed}
For fixed $r$, the generic ED degree of $X_{d,r}$ is a polynomial
of degree $r$ in $d$.
\end{corollary}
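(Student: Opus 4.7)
The plan is to deduce the corollary directly from the explicit formula given in Theorem \ref{thm:EDsecant}, namely
$$ {\rm EDdegree}_{\rm gen}(X_{d,r})\,=\,\sum_{i=0}^r \binom{d+1-r}{i} \binom{d-r-i}{r-i} 2^{r-i}. $$
Fix $r$ and treat $d$ as the variable. The key observation is that, for any fixed nonnegative integer $k$, the map $d\mapsto \binom{d+a}{k}$ (for fixed integer $a$) is a polynomial in $d$ of degree exactly $k$ with leading coefficient $1/k!$. This is because $\binom{d+a}{k} = (d+a)(d+a-1)\cdots(d+a-k+1)/k!$.

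Applying this termwise, $\binom{d+1-r}{i}$ is a polynomial in $d$ of degree $i$ with leading coefficient $1/i!$, and $\binom{d-r-i}{r-i}$ is a polynomial in $d$ of degree $r-i$ with leading coefficient $1/(r-i)!$. Their product is therefore a polynomial in $d$ of degree $i+(r-i)=r$ with leading coefficient $2^{r-i}/(i!(r-i)!)$. Summing over $i=0,\ldots,r$, we see that ${\rm EDdegree}_{\rm gen}(X_{d,r})$ is a polynomial in $d$ of degree at most $r$.

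To confirm that the degree is exactly $r$, I would compute the coefficient of $d^r$, which by the above equals
$$ \sum_{i=0}^r \frac{2^{r-i}}{i!(r-i)!} \,=\, \frac{1}{r!}\sum_{i=0}^r \binom{r}{i} 2^{r-i} \,=\, \frac{(1+2)^r}{r!} \,=\, \frac{3^r}{r!}, $$
by the binomial theorem. This is strictly positive, so the polynomial has degree precisely $r$, completing the proof. There is no real obstacle here: the result is an elementary consequence of Theorem \ref{thm:EDsecant}, and the only step requiring care is checking that the leading coefficients do not cancel, which the binomial identity above ensures. As a sanity check, the case $d=2r$ recovers Corollary \ref{cor:powerof3}, and the leading term $3^r d^r/r!$ is consistent with the generating function $(1+z)^{d+1-r}/(1-2z)^{d-2r+1}$ in (\ref{eq:genfunc}) growing like $3^d$ asymptotically in $d$ through its coefficient of $z^r$.
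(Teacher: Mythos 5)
Your proof is correct and follows exactly the route the paper intends: Corollary \ref{cor:forfixed} is stated without further argument, as an immediate consequence of the explicit formula (\ref{eq:2binomials}) in Theorem \ref{thm:EDsecant}, and your observation that each summand $\binom{d+1-r}{i}\binom{d-r-i}{r-i}2^{r-i}$ is a degree-$r$ polynomial in $d$ is precisely the intended reasoning. Your additional check that the leading coefficient sums to $3^r/r! \neq 0$ (so the degree is exactly $r$, not merely at most $r$) is a worthwhile refinement that the paper leaves implicit.
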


For example, we find the following explicit polynomials 
when the rank $r$ is small:
$$ \begin{matrix}
 {\rm EDdegree}_{\rm gen}(X_{d,1}) & = & 3d-2 ,\\
 {\rm EDdegree}_{\rm gen}(X_{d,2}) & = &  (9d^2-39d+38)/2 ,\\
 {\rm EDdegree}_{\rm gen}(X_{d,3}) & = & ( 9 d^3-99 d^2+348d-388)/ 2 ,\\
 {\rm EDdegree}_{\rm gen}(X_{d,4}) & = &  ( 27d^4-558d^3+4221d^2-13818d+16472)/ 8.
  \end{matrix}
 $$
 The values of these polynomials are the entries in
 the left columns  in Table~\ref{table:EDHankel} below.

\section{Hankel and Sylvester Matrices}\label{sec:HankelSylvester}

In this section we study the weighted low-rank approximation 
problem for matrices with a special structure that is given by
equating some matrix entries and setting others to zero.
One such family consists of the Hurwitz matrices in \cite[Theorem 3.6]{DHOST}.
We here discuss Hankel matrices, then 
catalecticants, and finally 
 Sylvester matrices.
The corresponding applications are
low-rank approximation of symmetric tensors
and approximate greatest common divisors.

The {\em Hankel matrix} $H[p,q]$ of format $p \times q$ has
the entry $x_{i+j-1}$ in row $i$ and column $j$.
So, the total number of unknowns is $n = p+q-1$.
We are most interested in the case when
this matrix is square or almost square.
The {\em Hankel matrix of order $n$}
is  $H[(n{+}1)/2,(n{+}1)/2]$ if $n$ is odd,
and it is  $H[(n/2,(n{+}2)/2]$ if $n$ is even.
We denote this matrix by $H_n$. For instance,
\begin{equation}
\label{ex:hankel56}
 H_5  = \begin{bmatrix}
 x_1 & x_2 & x_3 \\
 x_2 & x_3 & x_4 \\
 x_3 & x_4 & x_5
 \end{bmatrix} \quad \hbox{and} \quad
 H_6  = \begin{bmatrix}
 x_1 & x_2 & x_3 & x_4 \\
 x_2 & x_3 & x_4 & x_5 \\
 x_3 & x_4 & x_5 & x_6
 \end{bmatrix} . 
 \end{equation}
 For approximations by low-rank Hankel matrices, 
we consider  three natural weights:
\begin{itemize}
\item the matrix $\Omega_n$ has entry
$\,1/{\rm min}(i{+}j{-}1,n{-}i{-}j{+}2 )\, $ in row $i$ and column $j$;
\item the matrix ${\bf 1}_n$ has all entries equal to $1$;
\item the matrix $\Theta_n$ has 
$\binom{n-1}{i+j-2}/{\rm min}(i{+}j{-}1,n{-}i{-}j{+}2 ) $
in row $i$ and column~$j$.
\end{itemize}
We encountered these matrices for $n=5$ in 
 Example \ref{eq:hankel33}. For $n = 6$ we have
$$ \Omega_6 \, = \,
\begin{bmatrix} 
   1 & 1/2 & 1/3 & 1/3 \\
1/2 & 1/3  & 1/3 & 1/2 \\
1/3 & 1/3 & 1/2 & 1 
\end{bmatrix} \quad \hbox{and} \quad
 \Theta_6 \, = \,
\begin{bmatrix} 
   1 & 5/2 & 10/3 & 10/3 \\
5/2 & 10/3  & 10/3 & 5/2 \\
10/3 & 10/3 & 5/2 & 1 
\end{bmatrix} .
$$
The weights $\Omega_n$ represent
the usual Euclidean distance in $\R^n$,
the unit weights ${\bf 1}_n$ give the
Frobenius distance in the ambient matrix space,
and the weights $\Theta_n$ give
the natural metric in the space
of symmetric $2 {\times} 2 {\times} \cdots {\times} 2$-tensors.
Such a tensor corresponds to a binary form
$$ F(s,t) \,= \,
\sum_{i=1}^n \binom{n-1}{i-1} \cdot x_i \cdot s^{n-i} \cdot t^{i-1}. $$
The Hankel matrix $H_n$ has rank $1$ if and only if
$F(s,t)$ is the $(n{-}1)$st power of a linear form. More generally,
if $F(s,t)$ is the sum of $r$ powers of linear forms
then $H_n$ has rank $\leq r$.
As we saw in \S 3, this locus corresponds to the
$r$th secant variety of the rational normal curve in $\PP^{n-1}$.
Various ED degrees for our three
weight matrices are displayed in Table \ref{table:EDHankel}.

\begin{table}
\centering
\begin{tabular}{|c|c|c|c|c|}
\hline
\multicolumn{5}{|c|}{$\Lambda = \Omega_n $}\\
\hline
$\! n \backslash r \! $ &$1$&$2$&$3$&$4$\\
\hline\hline
$3$& 4  & & & \\
$4$& 7 & & & \\
$5$& 10 & 13 & & \\
$6$& 13 & 34 & & \\
$7$& 16 & 64 & 40 &\\
$8$& 19 & \!\!103\!\! & \!\!142\!\! &\\
$9$& 22 & \!\!151 \!\! & \!\!334\!\!  & \!\!121\!\! \\
\hline
\end{tabular} \quad
\begin{tabular}{|c|c|c|c|c|}
\hline
\multicolumn{5}{|c|}{$\Lambda = {\bf 1}_n $}\\
\hline
$ \! n \backslash r \! $ &$1$&$2$&$3$&$4$\\
\hline\hline
$3$&2 & & & \\
$4$&7 & & & \\
$5$&6 &9 & & \\
$6$&13 &34 & & \\
$7$&10 &38&34&\\
$8$&19 & \!\!103\!\!& \!\!142\!\!&\\
$9$&14 & \!\!103 \!\!&\!\! 246\!\!&\!\!113\!\!\\
\hline
\end{tabular} \quad
\begin{tabular}{|c|c|c|c|c|}
\hline
\multicolumn{5}{|c|}{$\Lambda = \Theta_n $}\\
\hline
$\! n \backslash r \! $ &$1$&$2$&$3$&$4$\\
\hline\hline
$3$& 2& & & \\
$4$& 3& & & \\
$5$& 4&7
& & \\
$6$& 5& 16 & & \\
$7$& 6& 28& 20&\\
$8$& 7& 43& 62&\\
$9$& 8& 61&\!\! 134\!\! &53\\
\hline
\end{tabular}
\caption{\label{table:EDHankel}
Weighted ED degrees for Hankel matrices of order $n$ and rank $r$.}
\end{table}

The entries in the leftmost chart in Table \ref{table:EDHankel} come from 
Theorem~\ref{thm:EDsecant}. Indeed, the variety of Hankel matrices $H_n$ of rank $\leq r$ is
precisely the secant variety $X_{n-1,r}$ we discussed  in Section 3.
The weight matrix $\Lambda  = \Omega_n$ exhibits the generic ED degree for that variety.
The columns on the left of Table \ref{table:EDHankel} are
the values of the polynomials in Corollary \ref{cor:forfixed},
and the diagonal entries $4, 13, 40, 121,\ldots$ are given by
Corollary \ref{cor:powerof3}.

All ED degrees in Table \ref{table:EDHankel}
were verified using Gr\"obner basis computations
over $\mathsf{GF}(65521)$ using the {\tt maple} package {\tt FGb} \cite{Fau02}.
The running times are closely tied to the valued of the ED degrees,
and they are similar to those reported in Table \ref{table:EDsecdet}.
Gr\"obner bases over $\mathbb{Q}$ can also be computed fairly easily
whenever the ED degree is below $100$, and for those cases we can  locate 
all real critical points using {\tt fgbrs}. However, for larger
instances, exact symbolic solving over $\mathbb{Q}$
becomes a considerable challenge due to the growth in coefficient size.

 Hankel matrices of rank $r$ correspond to 
symmetric $2 {\times} 2 {\times}  \cdots {\times} 2$-tensors
of tensor rank~$r$, and these can be represented by binary forms 
that are sums of $r$ powers of linear forms.
That is the point of the geometric discussion in Section \ref{sec:alggeo}.
This interpretation extends to symmetric tensors of arbitrary format,
with the rational normal curve replaced with the Veronese variety.
For a general study of
low-rank approximation of symmetric tensors
see Friedland and Stawiska \cite{FS}.
In general, there is no straightforward representation
of low rank tensors by low rank matrices with special structure.
However, there are some exceptions,  notably for rank $r=2$ 
tensors, by the results of Raicu \cite{Rai} and others in the 
recent tensor literature. We refer to Landsberg's book
  \cite{Lan}, especially Chapters 3, 7 and 10.
The resulting generalized Hankel matrices are  known as
{\em catalecticants} in the commutative algebra literature,
or as {\em moment matrices} in the optimization literature.
We now present a case study that arose from
a particular application in biomedical imaging.

We consider the following catalecticant matrix of format $6 \times 6$:
 $$ X \quad = \quad \begin{bmatrix}x_{400}& x_{310}& x_{301}& x_{220}& x_{211}& x_{202}\\
 x_{310}& x_{220}& x_{211}& x_{130}& x_{121}& x_{112}\\
 x_{301}& x_{211}& x_{202}& x_{121}& x_{112}& x_{103}\\
 x_{220}& x_{130}& x_{121}& x_{040}& x_{031}& x_{022}\\
 x_{211}& x_{121}& x_{112}& x_{031}& x_{022}& x_{013}\\
 x_{202}& x_{112}& x_{103}& x_{022}& x_{013}& x_{004}
\end{bmatrix}$$
The fifteen unknown entries are the coefficients of a ternary quartic
$$ \begin{matrix}
F(s,t,u) =
x_{400} s^4 {+} 
x_{040} t^4 {+} 
x_{004} u^4 + 
6 x_{220} s^2 t^2  {+}
6 x_{202} s^2 u^2 {+}
6 x_{022} t^2 u^2 \\ \qquad \quad +\, 
4 x_{310} s^3 t +
4 x_{301} s^3 u {+}
4 x_{130} s t^3 {+} 
4 x_{031} t^3 u {+} 
4 x_{103} s u^3 \\ \qquad\!\! +\,
4 x_{013} t u^3 +
12 x_{211} s^2 t u {+} 
12 x_{121} s t^2 u {+}
12 x_{112} s t u^2. \end{matrix} $$
The table $(x_{ijk})$ can be regarded as a 
symmetric tensor of format $3 \times 3 \times 3 \times 3$.
The coefficients  in $F(s,t,u)$ indicate
the multiplicity with which the $15$ unknowns occur
among the $3^4 = 81$ coordinates of that tensor.
To model the invariant metric in the tensor space
$\R^{3 \times 3 \times 3 \times 3}$ in our matrix representation, we use the weight matrix
$$ \Theta \,= \,
\begin{bmatrix}
 1 & 2 & 2 & 2  & 3 & 2 \\
 2 & 2 & 3 & 2  & 3 & 3 \\
 2 & 3 & 2 & 3 & 3 & 2 \\
 2 & 2 & 3 & 1 & 2 & 2 \\
 3 & 3 & 3 & 2 & 2 & 2 \\
 2 & 3 & 2 & 2 & 2 & 1 
 \end{bmatrix} .
$$
The problem is to approximate a given catalecticant matrix
$U = (u_{ijk})$
by a rank $2$ matrix with respect to  $\Theta$.
The expected number of critical points is as follows.

\begin{proposition} \label{prop:195}
Let $\mathcal{L}$ be the $15$-dimensional subspace of 
catalecticants $X$ in
$\R^{6 \times 6}$.
Then 
$${\rm EDdegree}_\Theta(\mathcal{L}_{\leq 2}) =  195
\quad \hbox{and} \quad
{\rm EDdegree}_{\rm gen}(\mathcal{L}_{\leq 2}) = 1813.
$$
\end{proposition}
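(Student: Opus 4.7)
The key geometric observation is that a $6\times 6$ catalecticant has rank at most $2$ if and only if the associated ternary quartic $F(s,t,u)$ is a sum of at most two fourth powers of linear forms. Thus $\mathcal L_{\leq 2}$ is the affine cone over the second secant variety $\Sigma_2 := \sigma_2(v_4(\PP^2)) \subset \PP^{14}$ of the quartic Veronese surface, a $5$-dimensional projective variety singular exactly along $v_4(\PP^2)$. The two numbers in the statement count complex critical points of the $\Theta$-weighted (resp.\ generic-weighted) squared distance on this variety, and I would compute them by two complementary methods.

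The primary (computational) approach uses formulation (\ref{eq:EDdetgeneral}) with $m=n=6$, $r=2$, and $s=21$ linear equations encoding the entry repetitions that cut out the catalecticant subspace $\mathcal L \subset \R^{6\times 6}$. Running \texttt{FGb} over $\mathsf{GF}(65521)$ for each of the two weight matrices (first $\Lambda=\Theta$, then a random generic $\Lambda$) should return exactly $195$ and $1813$ complex critical points. As a sanity check avoiding accidental contributions from $\mathrm{Sing}(\Sigma_2)$, I would also parametrize $F = c_1 L_1^4 + c_2 L_2^4$ by six free parameters, rederive the critical equations in unconstrained form as in (\ref{eq:unconstrOpt}), and count by a second Gröbner basis computation.

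The secondary (geometric) approach follows the template of Theorem \ref{thm:EDsecant} with the rational normal curve replaced by the Veronese surface. Desingularize $\Sigma_2$ by the $\PP^1$-bundle $\widetilde\Sigma_2 \to \mathrm{Hilb}^2(\PP^2)$ whose fiber over a length-$2$ subscheme $Z \subset \PP^2$ is the line $\PP(H^0(\mathcal O_Z(4))^{\vee})$ spanned by $v_4(Z)$. Letting $\mathcal E$ be the tautological rank-$2$ bundle of $v_4$-restrictions on $\mathrm{Hilb}^2(\PP^2)$ and $E := \ker(\mathcal O^{15} \twoheadrightarrow \mathcal E)$ the associated Schwarzenberger-type bundle, the polar degrees $\delta_i(\Sigma_2)$ become integrals of Segre classes of $E$ against powers of the tautological class on the $\PP^1$-factor, and summing them by \cite[Theorem 5.4]{DHOST} recovers the generic value $1813$. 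The drop to $195$ for $\Lambda=\Theta$ should then be explained, in the spirit of (\ref{eq:3ED}), by the non-transversality of the $\Theta$-isotropic quadric with $\Sigma_2$: $\Theta$ is (up to rescaling) the $\mathrm{SO}(3)$-invariant Bombieri/apolar inner product on ternary quartics, and the discrepancy $1813 - 195$ should equal the contribution from the non-transversal locus $\mathrm{Sing}(\Sigma_2 \cap Q_\Theta)$.

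The main obstacle is the intersection-theoretic bookkeeping on $\widetilde\Sigma_2$: unlike the rational normal curve case, where the base of the desingularization is $\PP^r$, here $\mathrm{Hilb}^2(\PP^2)$ is the blow-up of $\mathrm{Sym}^2(\PP^2)$ along the diagonal, and contributions from the exceptional divisor (which maps onto $v_4(\PP^2) = \mathrm{Sing}(\Sigma_2)$) must be carefully tracked in the Segre-class integrals. In addition, \cite[Theorem 5.4]{DHOST} applies only on the smooth locus of $\Sigma_2$, so one must verify separately that generic critical points of the weighted distance avoid the singular Veronese. Because of these complications, the Gröbner basis verification is the cleanest route to rigorous certification of $195$ and $1813$, with the geometric derivation providing independent confirmation and a scaffold for generalizations to higher-order catalecticants.
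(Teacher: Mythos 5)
Your proposal matches the paper's actual method: the paper does not carry out the implicit formulation (\ref{eq:EDdetgeneral}) or the Segre-class computation, but works entirely with what you call the ``sanity check,'' namely parametrizing rank-two catalecticants by $\tilde F(s,t,u) = a\,(s+bt+cu)^4 + d\,(s+et+fu)^4$ and solving the resulting unconstrained critical equations with {\tt FGb} over ${\sf GF}(65521)$, once for $\Lambda=\Theta$ and once for random coefficients. Your identification of $\mathcal L_{\leq 2}$ with the cone over $\sigma_2(v_4(\PP^2))$ is exactly right, and your caveat that the general implicit formulation is slower than the unconstrained one echoes the paper's own remark at the end of Example \ref{ex:schultz2}.

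There is, however, a concrete gap in the parametric verification as you describe it. The map $(a,b,c,d,e,f) \mapsto \tilde F$ is $2$-to-$1$ (swapping $(a,b,c)\leftrightarrow(d,e,f)$ fixes $\tilde F$), and its fibre jumps over the rank-one locus (where either $ad = 0$ or the two linear forms coincide). So a raw Gr\"obner basis count in parameter space returns $390$, not $195$; to recover the ED degree one must first saturate the ideal of partials by the preimage of the rank-one locus, $\langle a d\rangle \cap \langle b-e,\,c-f\rangle$, and then divide the resulting degree by $2$. The same bookkeeping applies to the generic-weight run, giving $3626 = 2\cdot 1813$. Without the saturation you may pick up spurious solutions on the singular Veronese, and without the division by the covering degree the stated values $195$ and $1813$ do not emerge. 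A second, milder caveat: Proposition~\ref{prop:generalformulation}, which justifies the formulation (\ref{eq:EDdetgeneral}), is stated for generic $\mathcal L$, whereas the catalecticant subspace is special; invoking it here would require a separate argument that the $15$-dimensional catalecticant space behaves like a generic one for the purposes of that formulation (the paper sidesteps this entirely by using the parametrization). Your geometric programme via $\mathrm{Hilb}^2(\PP^2)$ is a reasonable template for a future closed-form derivation, but is not needed for the proof and is not what the paper does.
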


The proof is a  computation as explained below. We first discuss an application.

\begin{example} \label{ex:schultz1} \rm
We consider the following symmetric $3 \times 3 \times 3 \times 3$-tensor:
$$
\begin{array}{|c|c|}
\hline
u_{400}&0.1023\\
u_{220}&0.0039\\
u_{310}& -0.002\\
u_{103}&0.0196\\
u_{211}&-0.00032569\\
\hline
\end{array}\quad\quad
\begin{array}{|c|c|}
\hline
u_{040}&0.0197\\
u_{202}&0.0407\\
u_{301}&0.0581\\
u_{031}&0.0029\\
u_{121}&-0.0012\\
\hline
\end{array}\quad\quad
\begin{array}{|c|c|}
\hline
u_{004}&0.1869\\
u_{022}&-0.00017418\\
u_{130}&0.0107\\
u_{013}&-0.0021\\
u_{112}&-0.0011\\
\hline
\end{array}
$$
This tensor was given to us by Thomas Schultz, who heads the
Visualization and Medical Image Analysis Group at the University of Bonn.
It represents a fiber distribution function,
estimated from diffusion Magnetic Resonance Imaging.
See \cite{Sch} for more information.
 \hfill $\diamondsuit$
\end{example}

We present an algebraic formulation of our
problem which was found to be suitable for symbolic computation.
Introducing six unknowns $a,b,c,d,e,f$, we 
parametrize the $6$-dimensional variety of
symmetric $3 \times 3 \times 3 \times 3$-tensors of rank $2$
by the ternary quartics
$$ \tilde F(s,t,u) \quad = \quad a \cdot (s+b t +c u)^4\,+\,d \cdot (s+e t+f u)^4. $$ 
Just like in the discussion in Remark \ref{differentpara:rmk}
and after Proposition \ref{prop:generalformulation}, the image of this parametrization 
is a dense open subset of the symmetric $3 \times 3 \times 3 \times 3$-tensors of rank~$2$. 
Covering all rank $2$ tensors can be achieved with 
three parametrizations as above.

Written out explicitly, this parametrization takes the form
$$
\begin{array}{rcl}
x_{400} &\!\!=\!\!& a+d \\
x_{220}&\!\!=\!\!& a b^2+d e^2\\ 
x_{310} &\!\!=\!\!& a b+d e \\
 x_{103} &\!\!=\!\!& a c^3+d f^3\\ 
x_{211} &\!\!=\!\!& a b c+d e f \\ 
\end{array}\quad\quad
\begin{array}{rcl}
x_{040} &\!\!=\!\!& a b^4+d e^4\\
x_{202} &\!\!=\!\!& a c^2+d f^2 \\ 
x_{301} &\!\!=\!\!& a c+d f\\ 
x_{031} &\!\!=\!\!& a b^3 c+d e^3 f  \\ 
x_{121}&\!\!=\!\!& a b^2 c+d e^2 f \\
\end{array}\quad\quad
\begin{array}{rcl}
x_{004} &\!\!=\!\!& a c^4+d f^4\\ 
x_{022} &\!\!=\!\!& a b^2 c^2+d e^2 f^2\\ 
x_{130} &\!\!=\!\!& a b^3+d e^3 \\ 
x_{013} &\!\!=\!\!& a b c^3+d e f^3\\ 
x_{112} &\!\!=\!\!& a b c^2+d e f^2 \\ 
\end{array}\quad\quad
$$

Note that our parametrization is $2$ to $1$: every rank $2$ catalecticant $X$
has two preimages, which are related by swapping the vectors
$(a,b,c)$ and $(d,e,f)$.
The fiber jumps in dimension over the singular locus, 
which consists of matrices $X$ of rank $1$. Their
preimage in parameter space is given by the ideal
 $\langle a d\rangle\cap \langle b-e, c-f\rangle$. 
The chosen weight matrix $\Theta$ now specifies the following
   unconstrained optimization problem. We seek to find the minimum in $\R^6$ of
 $$\begin{array}{@{}c@{}}
 G(a,b,c,d,e,f) \,\,= \,\,
 (u_{400}-a-d)^2+(u_{040}-a b^4-d e^4)^2+(u_{004}-a c^4-d f^4)^2 \qquad \qquad \\
+ 6 (u_{220}-a b^2-d e^2)^2+ 6 (u_{202}-a c^2-d f^2)^2
+6 (u_{022}-a b^2 c^2-d e^2 f^2)^2 \\
 +4 (u_{310}-a b-d e)^2+4 (u_{301}-a c-d f)^2+4 (u_{130}-a b^3-d e^3)^2\\
 +4 (u_{103}-a c^3-d f^3)^2+4 (u_{031}-a b^3 c-d e^3 f)^2+4 (u_{013}-a b c^3-d e f^3)^2 \\
 + 12 (u_{112}-a b c^2-d e f^2)^2+12 (u_{211}-a b c-d e f)^2+12 (u_{121}-a b^2 c-d e^2 f)^2.
 \end{array}
$$
The set of complex critical points is the zero locus of the ideal
$$I \quad = \quad \left\langle \frac{\partial G}{\partial a},\frac{\partial G}{\partial b},\frac{\partial G}{\partial c},\frac{\partial G}{\partial d},\frac{\partial G}{\partial e},\frac{\partial G}{\partial f}\right\rangle : 
\bigl(\,\langle a d \rangle\cap \langle b-e, c-f\rangle \, \bigr)^\infty.$$
For applications, we are interested in the real points in this variety.

{\em Computational proof of Proposition \ref{prop:195}.}
As argued in \cite[\S 2]{DHOST},
the ideal $I$ is radical and zero-dimensional
when the $u_{ijk}$ are generic rational numbers.
The number of solutions is the degree of $I$,
and we found this to be $ 370 = 2 \cdot 195$.
This is twice the ED degree of $\mathcal{L}_{\leq 2}$
with respect to $\Lambda = \Theta$. 
For this computation we used the {\tt FGb}
library in {\tt maple}. We used Gr\"obner bases over the finite field ${\sf
  GF}(65521)$ to avoid the swelling of rational coefficients, the data
$u_{ijk}$ are chosen uniformly at random in this field, and we
saturate only by $\langle a d (b-e)\rangle$.
The computation took 90 seconds and returned 390 critical points of $G$. 
Performing the same computation with the coefficients $1,6,4,12$ in $G(a,b,c,d,e,f)$
replaced with random field elements, we find  $3626 = 2 \cdot 1813 $ critical points, and hence
${\rm EDdegree}_{\rm gen}(\mathcal{L}_{\leq 2}) = 1813$.
\endproof

\begin{example} \label{ex:schultz2} \rm
We return to the particular data set in
Example \ref{ex:schultz1}.  Using the above parametrization, the best
rank 2 approximation can be obtained by solving a polynomial system.
This can be achieved by using symbolic or numerical
methods.

A numerical computation conducted by Jose Rodriguez with the software
{\tt Bertini} indicates that, for Thomas Schultz' data, precisely $9$
of the $195$ critical points are real. These correspond to $2$ local
minima and $7$ saddle points of the Euclidean distance
function. The precomputation with generic data took 2  hours on
  40 {\tt AMD Opteron 6276/2.3Ghz} cores. Then the computation
  with the numerical data in Example \ref{ex:schultz1} was achieved in 1 minute.
 
 These results were also computed by symbolic methods: a Gr\"obner basis
 computation conducted by Jean-Charles Faug\`ere and Mohab Safey El
 Din with the software {\tt FGb} returned an algebraic parametrization of the $195$ complex critical points
 by the roots of a univariate polynomial of degree $195$. 
 This  polynomial has $9$ real roots. Two of them correspond to the two
 local minima. The average size of the integer coefficients of this univariate
 polynomial is $11000$ digits.
  For this computation, the above formulation as an unconstrained optimization problem was used. 
 It took 11 minutes on a  2.6GHz {\tt IntelCore i7}.
  In general, for symbolic methods,
   unconstrained formulations seem to be better
     than the general implicit formulation in Proposition~\ref{prop:generalformulation}.
     See the comparisons of timings in Table~\ref{table:EDsecdet}.
     However, most instances of (\ref{eq:frobnorm2})
     do not admit an unconstrained formulation,
     because $\mathcal{L}_{\leq r}$ is usually not unirational. \hfill $\diamondsuit $\end{example}

Our last topic in this section is the study of Sylvester matrices.
We consider two arbitrary polynomials $F$ and $G$ in one variable $t$.
Suppose their degrees are $m$ and $n$ with $m \leq n$, so
$$ F(t) \,=\,  \sum_{i=0}^m a_i  t^i \quad \hbox{and} \quad
G(t)  \,=\, \sum_{j=0}^n b_j  t^j . $$
Fix $k $ with $ 1\leq k \leq m$.
The $k$-th Sylvester matrix of the pair $(F,G)$ equals
$$
{\rm Syl}_k(F,G) \quad = \quad
\begin{bmatrix}
a_0&  0    & \cdots &  0 &   b_0&  0    & \cdots &  0 \\
\vdots &   a_0 & \ddots&  \vdots &  \vdots &   b_0 & \ddots&  \vdots \\
a_m &   \vdots & \ddots & 0 &       b_n &   \vdots & \ddots & 0 \\
0 &          a_m  &   \vdots & a_0 &     0 &          b_n  &   \vdots & b_0 \\
\vdots &  \vdots   & \ddots & \vdots &  \vdots &  \vdots   & \ddots & \vdots \\
0 &          0  &   \cdots & a_m &       0 &          0  &   \cdots & b_n 
\end{bmatrix}
$$
This matrix has  $n+k$ rows and
$n-m+2k$ columns, so it is square for $k=m$,
and it has more rows than columns for $k < m$.
The maximal minors have size $n-m+2k$,
and they all vanish when
${\rm Syl}_k(F,G)$ has a non-zero
vector in its kernel. Such a vector
corresponds to a polynomial of degree $m-k+1$ that is a common factor
of $F$ and $G$.

The {\em approximate gcd problem} in computer algebra \cite{KL, KYZ}
aims to approximate a given pair $(F,G)$
by a nearby pair $(F^*,G^*)$ whose
Sylvester matrix ${\rm Syl}_k(F^*,G^*)$
has linearly dependent columns.
Writing $\mathcal{L}$ for the subspace of 
Sylvester matrices,  this is precisely our
ED problem for  $ \mathcal{L}_{\leq n-m+2k-1}$.
The following theorem furnishes a formula
for ${\rm EDdegree}_{\rm gen}(\mathcal{L}_{\leq n-m+2k-1})$.

\begin{theorem} \label{thm:FG}
For the variety of pairs $(F,G)$ of univariate polynomials
of degrees $(m,n)$ with a common factor of degree $m{-}k{+}1$,
the generic ED degree equals that of the
Segre variety of $(m{-}k{+}2) \times (n{-}m{+}2k)$-matrices of rank $1$.
It is given by  setting $s = 0$ in (\ref{eq:EDdegdual}). Using the {\tt Macaulay2} function {\tt ED} in
Example \ref{ex:1350}, we can write this ED degree~as
$$\,{\rm EDdegree}_{\rm gen}(\mathcal{L}_{\leq n-m+2k-1}) \,\,= \,\,
{\tt ED(m-k+2,n-m+2*k,1,0)} . $$
\end{theorem}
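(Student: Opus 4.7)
The strategy is to identify the projective variety $\mathcal{L}_{\leq n-m+2k-1}$ with the Segre variety $\Sigma = \PP^{m-k+1} \times \PP^{n-m+2k-1}$ of rank-one matrices, and to observe that both projective embeddings are polarized by the same line bundle $\sO(1,1)$ on $\Sigma$. The polar class formula for the generic ED degree will then yield the claimed equality.

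First I would set up the parametrization. Let $V = \C^{m-k+2}$ parametrize the polynomials $P$ of degree $m-k+1$ (the prospective common factor) and $W = \C^k \oplus \C^{n-m+k}$ parametrize pairs $(A,B)$ of polynomials with $\deg A \leq k-1$ and $\deg B \leq n-m+k-1$. A pair $(F,G)$ of degrees $(m,n)$ admits a common factor of degree $m-k+1$ exactly when $(F,G)=(PA,PB)$, so the bilinear multiplication $(P,(A,B))\mapsto (PA,PB)$ descends to a surjective linear map
$$\tilde\mu:\; V\otimes W\;\longrightarrow\; \C^{m+1}\oplus \C^{n+1}.$$
The image of the affine cone over the Segre $\Sigma=\PP V\times \PP W\subset \PP(V\otimes W)$ under $\tilde\mu$ is precisely the Sylvester variety $\mathcal{L}_{\leq n-m+2k-1}$.

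Next I would verify that $\tilde\mu$ restricts to an isomorphism from $\Sigma$ onto the projectivization of $\mathcal{L}_{\leq n-m+2k-1}$ inside $\PP^{m+n+1}$. Injectivity follows from unique factorization in $\C[t]$: for a generic rank-one tensor $P \otimes (A,B)$, the greatest common divisor of $PA$ and $PB$ recovers $P$ up to a scalar, after which $A$ and $B$ are determined. Crucially, the $m+n+2$ coordinate functions $(PA)_\ell$ and $(PB)_\ell$ realizing this embedding are bilinear forms on $V\times W$, i.e., sections of the line bundle $\sO(1,1) = \sO_{\PP V}(1)\boxtimes\sO_{\PP W}(1)$. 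Hence the Sylvester embedding of $\Sigma$ is induced by a linear subsystem of $|\sO(1,1)|$, in the same polarization class as the standard Segre embedding.

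Finally I would apply the Chern class machinery underlying Section~\ref{sec:alggeo}. By \cite[Theorem 5.4]{DHOST}, the generic ED degree equals $\sum_i \delta_i$, and when the variety is smooth and transverse to the isotropic quadric, each polar degree $\delta_i$ admits a closed formula in terms of the Chern classes of the tangent bundle of $\Sigma$ and the hyperplane class $c_1(\sO(1,1))$ (see \cite[Theorem 5.8]{DHOST}, whose consequences are exploited throughout Section~\ref{sec:alggeo}; compare (\ref{eq:delta_l})). Since both the Segre and the Sylvester embeddings realize $\Sigma$ with the same Chern classes and the same polarization class $c_1(\sO(1,1))$, the polar degrees, and therefore the generic ED degrees, agree. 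This common value is the one given by (\ref{eq:EDdegdual}) at $s=0$ with $(m,n)$ replaced by $(m-k+2,\,n-m+2k)$. The chief technical subtlety is the transversality hypothesis required to apply \cite[Theorem 5.8]{DHOST} to the Sylvester embedding, which sits in $\PP^{m+n+1}$ via a proper subsystem of $|\sO(1,1)|$; however, for generic weights $\Lambda$ the $\Lambda$-weighted isotropic quadric on $\C^{m+n+2}$ is itself generic and hence transverse to the smooth Sylvester variety, so the Chern class formula applies in both cases and gives the asserted equality.
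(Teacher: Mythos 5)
Your proposal follows essentially the same route as the paper's: parametrize the Sylvester variety by multiplication with a common factor, realize the result as a linear projection of a Segre variety, and conclude that the ED degree is determined by the polar classes of that Segre. However, there is a genuine gap in the middle step. You assert that $\tilde\mu$ ``restricts to an isomorphism from $\Sigma$ onto the projectivization of $\mathcal{L}_{\leq n-m+2k-1}$,'' and your injectivity argument only covers the \emph{generic} point. In fact the map is not injective: over the locus where $\gcd(F,G)$ has degree strictly larger than $m-k+1$, a point $(F,G)$ has one preimage for each degree-$(m{-}k{+}1)$ divisor of the gcd that one could factor out. The map is only birational onto its image, i.e.\ a desingularization, and $\mathcal{L}_{\leq n-m+2k-1}$ is genuinely singular (already for $m-k+1=1$ it is the resultant hypersurface). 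Consequently you cannot invoke \cite[Theorem~5.8]{DHOST} directly: that theorem requires a smooth subvariety of projective space, and $\Sigma$ is not embedded in $\PP^{m+n+1}$ via the Sylvester coordinates. What your Chern-class computation yields are the polar degrees of the smooth $\Sigma$ equipped with the line bundle $\sO(1,1)$ and a chosen linear system; the statement that these coincide with the polar degrees of the singular image variety is exactly the content of Piene's theory of polar classes for singular varieties under linear projection (\cite[\S 4]{Pie}), which the paper cites precisely to fill this hole. Without that input, the step ``polar degrees of $\Sigma$ in the two embeddings agree $\Rightarrow$ ED degrees of the Segre and Sylvester varieties agree'' does not follow, because the latter are not embeddings of the same smooth variety.
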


\begin{proof}
A natural desingularization is given by multiplying with the
desired common factor:
\begin{equation}
\label{eq:segremap}
\begin{matrix}
 \PP^{m-k+1} \times \PP^{n-m+2k-1} & \to &  \mathcal{L}_{\leq n-m+2k-1}, \\
\left[\,A(t)\,,\,\left(B(t),C(t)\right)\,\right] & \mapsto & \left[\,A(t)B(t),A(t)C(t) \,\right].
\end{matrix}
\end{equation}
Here $A(t), B(t),C(t)$  are polynomials of degrees
$m-k+1, k-1,n-m+k-1$ respectively.
The map (\ref{eq:segremap})
lifts to a  linear projection map
from the Segre embedding of $ \PP^{m-k+1} \times \PP^{n-m+2k-1}$.
Work of Piene \cite[\S 4]{Pie} implies that
the degrees of polar loci  can be computed
on that Segre variety. 
The ED degree is a sum of degrees of these,
by Corollary \ref{thm:sEDpolar}. 
The result follows.
\end{proof}

For $m = k$, when the Sylvester matrix is square,
Theorem \ref{thm:FG} refers to
$2 \times (n{+}m)$-matrices of rank $1$.
Similarly to \cite[Example 5.12]{DHOST},
their ED degree is $4(m{+}n)-2$.

\begin{corollary}
The generic ED degree of the
Sylvester determinant ${\rm Syl}_m $ equals $4(m+n)-2$.
\end{corollary}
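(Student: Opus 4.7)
The plan is to specialize Theorem \ref{thm:FG} to $k = m$ and then evaluate the resulting Segre ED degree directly. Setting $k = m$ makes the Sylvester matrix square of size $(n+m)\times(n+m)$, so $\mathcal{L}_{\leq n+m-1}$ is precisely the Sylvester determinant hypersurface. Theorem \ref{thm:FG} identifies its generic ED degree with $\mathrm{EDdegree}_{\rm gen}(Y)$, where $Y := \PP^1 \times \PP^{N-1}$ is the Segre variety of rank-one $2 \times N$ matrices and $N := n + m$. It therefore suffices to show $\mathrm{EDdegree}_{\rm gen}(Y) = 4N-2$.

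For this, I would apply the polar-class formulas from earlier in the section. Formula (\ref{eq:EDdegdual}) with $r = 1$, $s = 0$, and matrix format $(2, N)$ gives $\mathrm{EDdegree}_{\rm gen}(Y) = \sum_{\ell=0}^{N} \delta_\ell(Y)$, and (\ref{eq:delta_l}) expresses each polar degree through the facial volumes $V_k$ of the product of simplices $\Delta_1 \times \Delta_{N-1}$. A short coefficient extraction from $(1+s)^2(1+t)^N(s+t)^k$ yields the closed form $V_k = k\binom{N}{k} + 2\binom{N}{k+1}$.

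The remaining task is a combinatorial identity. Interchanging the two summations in $\sum_{\ell=0}^{N}\delta_\ell(Y)$ and applying $\sum_{\ell=0}^{k}\binom{k+1}{\ell+1} = 2^{k+1}-1$ reduces the calculation to the single alternating sum $\sum_{k=0}^{N}(-1)^{N-k}(2^{k+1}-1)\,V_k$. Substituting the explicit formula for $V_k$, splitting into its two pieces, and applying the identity $\sum_{k}(-1)^{N-k}\binom{N}{k}2^k = 1$ together with its variants for shifted binomial coefficients yields $4N-2$ (as a sanity check, $N = 2$ and $N = 3$ return $6$ and $10$). Substituting $N = n+m$ completes the proof. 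The only nonroutine step is this final combinatorial manipulation; the rest is immediate from Theorem \ref{thm:FG} and the polar-class machinery, exactly paralleling \cite[Example 5.12]{DHOST} for the ED degree of rank-one $2 \times N$ matrices.
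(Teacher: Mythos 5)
Your argument is correct and takes essentially the same route as the paper: both specialize Theorem \ref{thm:FG} to $k=m$ to identify the ED degree of ${\rm Syl}_m$ with that of the Segre variety of rank-one $2\times(m+n)$ matrices. The paper then simply quotes \cite[Example 5.12]{DHOST} for the value $4(m+n)-2$, whereas you re-derive it from the polar-class formulas (\ref{eq:delta_l}) and (\ref{eq:EDdegdual}); your expression $V_k = k\binom{N}{k} + 2\binom{N}{k+1}$ and the ensuing alternating sum do indeed yield $4N-2$, so this is a valid self-contained replacement for the citation.
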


We consider three
natural choices of weight matrices for the
low-rank approximation of  Sylvester matrices.
As before in Table \ref{table:EDHankel}, we write
 $\Omega_{m,n}$
 for the weight matrix
that represents the Euclidean distance on $\R^{m+n+2}$: it is the matrix which has the same pattern as ${\rm Syl}_k$ with $a_i$ and $b_j$ replaced respectively by $1/(n-m+k)$ and $1/k$. 
We also write $\Theta_{m,n}$ for the weight matrix of
the rotation invariant quadratic form: $a_i$ is replaced by $1/((n-m+k)\binom{m}{i})$ and $b_j$ is replaced by $1/(k\binom{n}{j})$. In Table \ref{table:EDSylvester} we present the
ED degrees  for these choices of weights.
The left table shows 
the generic behavior predicted by Theorem \ref{thm:FG}.
At present, we do not
know a general formula for the entries of the two tables on the right side, but
we are hopeful that an approach like \eqref{eq:3ED}
will lead to such formulas.
Along the rightmost margins,
where the matrix ${\rm Syl}_m $ is square,
the formula seems to be
$\,{\rm EDdegree}_{\Theta}( \mathcal{L}_{\leq n+k-1})\,= \, 2n$.

\begin{table}[h]
\centering
\begin{tabular}{@{}|@{\,}c@{\,\,}|@{\,}c@{\,}|@{\,}c@{\,}|@{\,}c@{\,}|@{\,}c@{\,}|@{}}
\hline
\multicolumn{5}{|c|}{
$\Lambda$ is generic
}\\
\hline
$(m,n) \backslash k \! $ &$1$&$2$&$3$&$4$\\
\hline\hline
$\!\! (2,2)$& 10 & 14 & & \\
$\!\! (2,3)$& 39 & 18 & & \\
$\!\! (2,4)$& 83 & 22 & & \\
$\!\! (2,5)$& 143 & 26  & &  \\
$\!\! (3,3)$& 14 & 83 & 22 & \\
$\!\! (3,4)$& 83 & 143 & 26  &\\
$\!\! (3,5)$& 284 & 219 & 30 &\\
$\!\! (4,4)$& 18 & 284  & 219   & 30  \\
$\!\! (4,5)$& 143 & 676  & 311 & 34 \\
\hline
\end{tabular} 
\begin{tabular}{@{}|@{\,}c@{\,\,}|@{\,}c@{\,}|@{\,}c@{\,}|@{\,}c@{\,}|@{\,}c@{\,}|@{}}
\hline
\multicolumn{5}{|c|}{$\Lambda = \Omega_{m,n}$
}\\
\hline
$(m,n) \backslash k \! $ &$1$&$2$&$3$&$4$\\
\hline\hline
$\!\! (2,2)$& 2 & 6 & & \\
$\!\! (2,3)$& 23 & 18 & & \\
$\!\! (2,4)$& 75 & 22 & & \\
$\!\! (2,5)$& 119 & 18  & &  \\
$\!\! (3,3)$& 2 & 19 & 10 & \\
$\!\! (3,4)$& 35 & 95 & 26  &\\
$\!\! (3,5)$& 188 & 203 & 26 &\\
$\!\! (4,4)$& 2 & 36 & 59 & 14  \\
$\!\! (4,5)$& 47 & 276 & 215 & 34 \\
\hline
\end{tabular}
\begin{tabular}{@{}|@{\,}c@{\,\,}|@{\,}c@{\,}|@{\,}c@{\,}|@{\,}c@{\,}|@{\,}c@{\,}|@{}}
\hline
\multicolumn{5}{|c|}{$\Lambda = \Theta_{m,n}$}\\
\hline
$(m,n) \backslash k \! $ &$1$&$2$&$3$&$4$\\
\hline\hline
$\!\! (2,2)$&2 & 4 & & \\
$\!\! (2,3)$& 19 & 6 & & \\
$\!\! (2,4)$& 29 & 8 & & \\
$\!\! (2,5)$& 61 &  10 & &  \\
$\!\! (3,3)$& 2 & 19 &  6 & \\
$\!\! (3,4)$& 41 & 53 &   8 &\\
$\!\! (3,5)$& 106 & 81 &  10 &\\
$\!\! (4,4)$& 2 & 50  & 45   & 8   \\
$\!\! (4,5)$& 71 & 256  & 101 &  10 \\
\hline
\end{tabular}
\caption{\label{table:EDSylvester}
Weighted ED degrees for Sylvester matrices ${\rm Syl}_k(F,G)$}
\end{table}

\bigskip
\medskip

\noindent
{\bf Acknowledgements.}\\
We thank the following colleagues for their help with this project:
Jean-Charles Faug\`ere, William Rey,
Ragni Piene,  Jose Rodriguez,
 Mohab Safey El Din, \'Eric
Schost,  and Thomas Schultz.
Giorgio Ottaviani is a member of GNSAGA-INDAM. 
Pierre-Jean Spaenlehauer and Bernd Sturmfels were hosted by
the Max-Planck Institute f\"ur Mathematik in Bonn, Germany.
Bernd Sturmfels was also supported
by the NSF (DMS-0968882).

\end{document}